\theoremstyle{definition}
\newtheorem{definition}{Definition}[section]
\newtheorem{example}[definition]{Example}
\newtheorem{remark}[definition]{Remark}
\theoremstyle{plain}
\newtheorem{lemma}[definition]{Lemma}
\newtheorem{proposition}[definition]{Proposition}
\newtheorem{theorem}[definition]{Theorem}
\newcommand{\field}{\mathbb{K}}
\begin{document}

\title[Lie-Yamaguti algebras]
{Symmetric matrices, orthogonal Lie algebras, and Lie-Yamaguti algebras}

\author[Benito]{Pilar Benito}

\address{Departamento de Matem\'aticas y Computaci\'on, Universidad de La Rioja, Espa\~na}

\email{pilar.benito@unirioja.es}

\author[Bremner]{Murray Bremner}

\address{Department of Mathematics and Statistics, University of Saskatchewan, Canada}

\email{bremner@math.usask.ca}

\author[Madariaga]{Sara Madariaga}

\address{Department of Mathematics and Statistics, University of Saskatchewan, Canada}

\email{madariaga@math.usask.ca}


\subjclass[2010]{Primary 17A30. Secondary 17A40, 17A60, 17B10, 17B60, 17C50.}

\keywords{Symmetric matrices, orthogonal Lie algebras, Lie-Yamaguti algebras, representation theory, polynomial identities, computer algebra,
Lie-Jordan-Yamaguti algebras}

\begin{abstract}
On the set $H_n(\field)$ of symmetric $n \times n$ matrices over the field $\field$ we can define
various binary and ternary products which endow it with the structure of a Jordan algebra or a Lie
or Jordan triple system.
All these non-associative structures have the orthogonal Lie algebra $\mathfrak{so}(n,\field)$ as
derivation algebra.
This gives an embedding $\mathfrak{so}(n,\field) \subset \mathfrak{so}(N,\field)$ for $N = \binom{n+1}{2}-1$.
We obtain a sequence of reductive pairs $(\mathfrak{so}(N,\field), \mathfrak{so}(n,\field))$ that provides
a family of irreducible Lie-Yamaguti algebras.
In this paper we explain in detail the construction of these Lie-Yamaguti algebras.
In the cases $n \le 4$, we use computer algebra to determine the polynomial identities of degree $\le 6$;
we also study the identities relating the bilinear Lie-Yamaguti product with the trilinear product obtained from
the Jordan triple product.
\end{abstract}

\maketitle


\section{Introduction}

In this paper we use the representation theory of Lie algebras and computer algebra to study a family of
Lie-Yamaguti algebras whose standard envelopes are orthogonal simple Lie algebras.
Our main goal is to determine the polynomial identities satisfied by these structures.

\begin{definition} \label{df:LY} \cite{K,KW,Ya58}
A \emph{Lie-Yamaguti algebra} (\emph{LY algebra} for short)
is a vector space $\mathfrak{m}$ with a bilinear product $\mathfrak{m}\times\mathfrak{m}\rightarrow \mathfrak{m}$,
$(x,y) \mapsto x \cdot y$, and a trilinear product
$\mathfrak{m}\times\mathfrak{m}\times\mathfrak{m}\rightarrow \mathfrak{m}$, $(x,y,z) \mapsto \langle x, y, z \rangle$,
satisfying:
\begin{align}
\tag{LY1} & x\cdot x \equiv 0 \\
\tag{LY2} & \langle x,x,y\rangle \equiv 0 \\
\tag{LY3} & \circlearrowleft_{x,y,z} (\langle x,y,z\rangle+(x\cdot y)\cdot z ) \equiv 0 \\
\tag{LY4} & \circlearrowleft_{x,y,z} \langle x\cdot y,z,t\rangle \equiv 0 \\
\tag{LY5} & \langle x, y,u\cdot v\rangle \equiv \langle x,y,u\rangle\cdot v+u\cdot \langle x,y,v\rangle \\
\tag{LY6} & \langle x,y,\langle u,v,w\rangle\rangle \equiv
\langle\langle x,y,u\rangle,v,w\rangle+\langle u,\langle x,y,v\rangle,w\rangle+\langle u,v,\langle x,y,w\rangle\rangle
\end{align}
Here $\circlearrowleft_{x,y,z}$ means the cyclic sum on $x,y,z$, and $\equiv$ indicates that the equation holds
for all values of the variables.
\end{definition}

LY algebras with trivial ternary product are Lie algebras.
LY algebras with trivial binary product are Lie triple systems (LTS), which appear in the study of the symmetric spaces.
They are the odd parts of $\mathbb{Z}_2$-graded Lie algebras which are simple as graded algebras,
and were classified in \cite{Lister} using involutive automorphisms;
see \cite{MH} for an approach using representation theory.
A simple LTS is either irreducible or 2-irreducible as a module over its derivation Lie algebra;
in the 2-irreducible case, the components are dual modules.
For general simple LY algebras, the structure is more complicated \cite{BDE05}.
One goal of the present paper is to develop interesting examples to clarify the general theory.

Many examples of LY algebras are provided by reductive pairs \cite{Sa68} which are closely related to
reductive homogeneous spaces; in particular, LY algebras related to spheres have been studied in \cite{EM1,EM2}.
The LY algebras whose standard envelope is a Lie algebra of type $G_2$ are studied in \cite{BDE05};
several new examples of binary-ternary products are given in terms of the octonions,
and all nonabelian reductive subalgebras of $G_2$ are classified up to conjugation over a field of characteristic 0.
In that paper, the associated reductive pairs and LY algebras were described in detail,
using the representation theory of the split 3-dimensional simple Lie algebra in the spirit of \cite{Di84}.

The paper is organized as follows.
Section \ref{sectionprelim} gives some basic definitions and constructions.
Section \ref{sectionsymmetric} introduces the reductive pairs related to orthogonal Lie algebras, and the corresponding LY algebras.
Section \ref{sectionnonassoc} provides some combinatorial and algorithmic background on polynomial identities.
Section \ref{sectionresults} presents our computational results for the LY algebra obtained from the embedding
$\mathfrak{so}(4) \subset \mathfrak{so}(9)$.
Section \ref{sectionjordan} studies the binary-ternary structure obtained by replacing the trilinear LY operation
with the Jordan triple product.
Section \ref{sectionconclusion} gives some suggestions for further research.


\section{Preliminaries} \label{sectionprelim}

Following \cite{BEM09,SaWi67}, we note that two elements $x,y$ in an LY algebra $\mathfrak{m}$ define
a linear map $d_{x,y}\colon \mathfrak{m}\rightarrow\mathfrak{m}$, $z \mapsto \langle x,y,z\rangle$,
which by (LY5-6) is a derivation of both the binary and ternary products.

\begin{definition}
The linear maps $d_{x,y}$ are called \emph{inner derivations}.
We write $D(\mathfrak{m}) \subseteq \mathrm{End}(\mathfrak{m})$ for their linear span.
\end{definition}

\begin{remark}
By (LY6), the space $D(\mathfrak{m})$ is closed under the Lie bracket.
\end{remark}

\begin{definition}
The \emph{standard envelope} of $\mathfrak{m}$ is the direct sum
$\mathfrak{g}(\mathfrak{m}) = D(\mathfrak{m}) \oplus \mathfrak{m}$ endowed with the structure of a Lie algebra by
\begin{align*}
&
[D(x,y),D(z,t)]= D([x,y,z],t)+D(z,[x,y,t]),
\\
&
[D(x,y),z]=D(x,y)(z)=[x,y,z],
\qquad
[z,t]=D(z,t)+z\cdot t.
\end{align*}
\end{definition}

\begin{remark}
$D(\mathfrak{m})$ is a subalgebra of $\mathfrak{g}(\mathfrak{m})$, and $\mathfrak{m}$ is an $D(\mathfrak{m})$-module.
\end{remark}

\begin{definition}
Let $\mathfrak{g}$ be a Lie algebra with product $[x,y]$.
A \emph{reductive decomposition} of $\mathfrak{g}$ is a vector space direct sum
$\mathfrak{g} = \mathfrak{h} \oplus \mathfrak{m}$ satisfying
$[\mathfrak{h},\mathfrak{h}]\subseteq \mathfrak{h}$ and $[\mathfrak{h},\mathfrak{m}]\subseteq \mathfrak{m}$.
In this case, we call $(\mathfrak{g},\mathfrak{h})$ a \emph{reductive pair}.
\end{definition}

For a reductive decomposition $\mathfrak{g}=\mathfrak{h}\oplus \mathfrak{m}$,
there exist natural binary and ternary products on $\mathfrak{m}$ defined by
\begin{equation}
\label{standardLY}
x\cdot y = \pi_{\mathfrak{m}}\bigl([x,y]\bigr),
\qquad
[x,y,z]=\bigl[\pi_{\mathfrak{h}}([x,y]),z],
\end{equation}
where $\pi_{\mathfrak{h}}$ and $\pi_{\mathfrak{m}}$ are the projections on $\mathfrak{h}$ and $\mathfrak{m}$.
These products endow $\mathfrak{m}$ with the structure of an LY algebra, and
any LY algebra has this form since $( \, \mathfrak{g}(\mathfrak{m}), D(\mathfrak{m}) \, )$
is a reductive pair.

\begin{definition}
The LY algebra structure \eqref{standardLY} is the \emph{standard LY algebra} given by the reductive pair
$(\mathfrak{g},\mathfrak{h})$.
We call this LY algebra \emph{irreducible} if $\mathfrak{m}$ is an irreducible $\mathfrak{h}$-module,
and $k$-\emph{irreducible} if $\mathfrak{m}$ is the direct sum of $k$
irreducible $\mathfrak{h}$-modules.
\end{definition}

The classification of isotropically irreducible reductive homogeneous spaces by Wolf \cite{Wo68,Wo84}
was reconsidered in \cite{BEM09,BEM11} using an algebraic approach with reductive pairs related to
nonassociative structures, in particular Lie and Jordan algebras and triples.
A summary of the close connections between nonassociative structures and isotropically irreducible
reductive homogeneous spaces is given in \cite[Tables 7--9]{BEM11}.
We emphasize \cite[Ch.~I, \S 11]{Wo68} the two sequences of homogeneous spaces
  \[
  \mathbf{SO}(2k^2{+}3k)/ \, \mathbf{SO}(2k{+}1) \quad (k \ge 2),
  \qquad
  \mathbf{SO}(2k^2{+}k{-}1) / \, \mathbf{SO}(2k) \quad (k \ge 4).
  \]
If we study the corresponding Lie algebras, we can combine these two sequences into a single sequence of
orthogonal reductive pairs,
  \[
  (\mathfrak{so}(N), \mathfrak{so}(n))
  \qquad
  n \ge 3, \; n \ne 4,
  \qquad
  N = \binom{n{+}1}{2}-1.
  \]
We have included two additional cases, one from each sequence of homogeneous spaces.
For $n = 3$ (first sequence, $k = 1$) we obtain $\mathbf{SO}(5)/ \, \mathbf{SO}(3)$,
and for $n = 6$ (second sequence, $k = 3$) we obtain $\mathbf{SO}(20) / \, \mathbf{SO}(6)$.
The case $n = 2$ (second sequence, $k = 1$) is trivial.
The case $n = 4$ (second sequence, $k = 2$) is a 2-irreducible LY algebra related to the homogeneous space
$\mathbf{SO}(9)/ (\mathbf{SO}(3)\times\mathbf{SO}(3))$.
(See Dickinson and Kerr \cite{Dic08} for general results on compact homogeneous spaces with two isotropy summands.)

All structures in this paper are finite-dimensional over an algebraically closed field $\field$ of characteristic 0.


\section{LY algebras related to symmetric matrices} \label{sectionsymmetric}

The vector space $H_n(\field)$ of symmetric $n\times n$ matrices can be endowed with binary and ternary products
giving a Jordan algebra, a Lie triple system or a Jordan triple system.
Each of these non-associative structures has the orthogonal Lie algebra $\mathfrak{so}_n(\field)$ of type $B$ or $D$
as its Lie algebra of derivations $\mathrm{Der}\,H_n(\field)$; this Lie algebra is simple except for $n=2,4$.
We denote by $H_n(\field)_0$ the subspace of matrices with trace 0; this is the orthogonal complement of
the 1-dimensional subspace spanned by the identity matrix $I_n$ with respect to the trace form $T(a,b) = \mathrm{tr}(ab)$.
We denote the dimension of $H_n(\field)_0$ by $N = \binom{n+1}{2}-1$.
Every derivation $d \in \mathrm{Der}\,H_n(\field)$ annihilates $I_n$ and satisfies $d( H_n(\field)) \subset H_n(\field)_0$.
Since the restriction of $T$ to $H_n(\field)_0$ is nondegenerate, $\mathrm{Der}\, H_n(\field)$ embeds into
the Lie algebra $\mathfrak{so}_N(\field)$ of endomorphisms of $H_n(\field)_0$ which are orthogonal with respect to $T$.
For $n \ge 2$ we obtain a sequence of reductive pairs $(\mathfrak{so}(H_n(\field)_0, T), \mathrm{Der}\, H_n(\field))$
which is the Lie algebraic analogue of the reductive homogeneous spaces mentioned in the Introduction as quotients of
orthogonal Lie groups.
(Since $n = 2$ gives $N = 2$, we ignore this trivial case.)
The components of these reductive pairs are simple Lie algebras for $n \ge 3$, $n \ne 4$;
this implies some preliminary structural results on the associated LY algebras
(see \cite[Th.~2]{SaWi67} and \cite[Pr.~1.3]{BEM09}):

\begin{theorem}
If $\mathfrak{g}$ is a simple Lie algebra and $\mathfrak{h} \subset \mathfrak{g}$ is a semisimple subalgebra,
then $(\mathfrak{g},\mathfrak{h})$ is a reductive pair with $\mathfrak{m} = \mathfrak{h}^\perp$ with respect
to the Killing form, and either $[\mathfrak{m},\mathfrak{m}] = \{0\}$ or $\mathfrak{m}$ is simple.
\end{theorem}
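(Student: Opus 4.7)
The plan is to establish the reductive decomposition first and then argue the dichotomy for $\mathfrak{m}$.

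For the decomposition, I would first verify that the restriction of the Killing form $\kappa$ of $\mathfrak{g}$ to $\mathfrak{h}$ is nondegenerate. Simplicity of $\mathfrak{g}$ gives $Z(\mathfrak{g})=0$, so the adjoint action $\mathrm{ad}_\mathfrak{g}|_{\mathfrak{h}}\colon \mathfrak{h}\to \mathrm{End}(\mathfrak{g})$ is faithful (its kernel would lie in $Z(\mathfrak{g})\cap \mathfrak{h}$). A semisimple Lie algebra acting faithfully on a finite-dimensional vector space has nondegenerate trace form --- a standard consequence of Cartan's criterion combined with the absence of nonzero abelian ideals --- and here that trace form is exactly $\kappa|_{\mathfrak{h}}$. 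Therefore $\mathfrak{g} = \mathfrak{h} \oplus \mathfrak{m}$ as vector spaces with $\mathfrak{m} = \mathfrak{h}^\perp$, and $\kappa|_\mathfrak{m}$ is also nondegenerate. The invariance $\kappa([x,y],z)=\kappa(x,[y,z])$ then yields $[\mathfrak{h},\mathfrak{m}]\subseteq \mathfrak{m}$: for $h,h'\in \mathfrak{h}$ and $m\in \mathfrak{m}$, $\kappa([h,m],h')=-\kappa(m,[h,h'])=0$ because $[h,h']\in \mathfrak{h}$. This establishes $(\mathfrak{g},\mathfrak{h})$ as a reductive pair.

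For the dichotomy, assume $[\mathfrak{m},\mathfrak{m}]\neq \{0\}$; I would argue by contradiction that no proper nontrivial $\mathfrak{h}$-stable decomposition of $\mathfrak{m}$ can be compatible with the inherited bracket. Suppose $\mathfrak{n}\subsetneq \mathfrak{m}$ is a nonzero $\mathfrak{h}$-submodule. By Weyl's theorem on complete reducibility, $\mathfrak{m} = \mathfrak{n} \oplus \mathfrak{n}'$ with $\mathfrak{n}'$ also $\mathfrak{h}$-stable, and one may take $\mathfrak{n}' = \mathfrak{n}^\perp \cap \mathfrak{m}$ since $\kappa|_\mathfrak{m}$ is nondegenerate and $\mathfrak{h}$-invariant. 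The plan is then to assemble a proper nonzero Lie ideal of $\mathfrak{g}$ from $\mathfrak{n}$, contradicting simplicity: the natural candidate is the subspace generated by $\mathfrak{n}$ under iterated brackets with $\mathfrak{g}$. Stability under $\mathrm{ad}\,\mathfrak{h}$ is automatic from the $\mathfrak{h}$-invariance of $\mathfrak{n}$ and the Jacobi identity; stability under $\mathrm{ad}\,\mathfrak{m}$ requires controlling the cross brackets $[\mathfrak{n}',[\mathfrak{n},\mathfrak{m}]]$, which one handles by expanding along $\mathfrak{h}\oplus \mathfrak{m}$ and exploiting $\kappa$-orthogonality of $\mathfrak{n}$ and $\mathfrak{n}'$.

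The main obstacle is the careful bookkeeping needed to show the candidate is both a proper ideal (so that $\mathfrak{n}'$ remains outside) and a nonzero one --- tracking the $\mathfrak{h}$- and $\mathfrak{m}$-components of $[\mathfrak{n},\mathfrak{n}]$, $[\mathfrak{n},\mathfrak{n}']$, and $[\mathfrak{n}',\mathfrak{n}']$, and verifying that $\kappa$-orthogonality propagates through iterated brackets (via invariance) so that $\mathfrak{n}'$ stays disjoint from the ideal. Since the statement is attributed to \cite[Th.~2]{SaWi67} and \cite[Pr.~1.3]{BEM09}, I expect the authors simply to invoke those references rather than reproduce the full argument.
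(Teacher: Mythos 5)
Your first paragraph is correct and is the standard argument: $\mathrm{ad}_\mathfrak{g}|_\mathfrak{h}$ is faithful, so the trace form of this representation --- which is exactly $\kappa|_\mathfrak{h}$ --- is nondegenerate by Cartan's criterion, giving the orthogonal splitting $\mathfrak{g}=\mathfrak{h}\oplus\mathfrak{h}^\perp$, and invariance of $\kappa$ yields $[\mathfrak{h},\mathfrak{m}]\subseteq\mathfrak{m}$. You also guessed correctly that the paper supplies no proof: the theorem is stated with references to \cite[Th.~2]{SaWi67} and \cite[Pr.~1.3]{BEM09} and nothing more, so there is no argument in the text to compare with.

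The dichotomy part of your plan, however, contains a genuine gap: you take $\mathfrak{n}$ to be an arbitrary nonzero proper $\mathfrak{h}$-\emph{submodule} of $\mathfrak{m}$ and aim to contradict the simplicity of $\mathfrak{g}$. Here ``$\mathfrak{m}$ is simple'' means simple as an algebra (no proper nonzero ideals for the induced products on $\mathfrak{m}$), not irreducible as an $\mathfrak{h}$-module; the paper is careful to keep ``simple'' and ``irreducible'' distinct. If your strategy worked it would prove that $\mathfrak{m}$ is always $\mathfrak{h}$-irreducible, and the paper's own central example refutes this: for $\mathfrak{g}=\mathfrak{so}_9$ and $\mathfrak{h}=\mathfrak{so}_3\oplus\mathfrak{so}_3$ the complement $\mathfrak{m}=LY_4=U\oplus U'$ has two proper nonzero $\mathfrak{h}$-submodules and $[\mathfrak{m},\mathfrak{m}]\neq\{0\}$, yet $LY_4$ is simple. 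Concretely, the step that fails is keeping $\mathfrak{n}'$ out of the ideal generated by $\mathfrak{n}$: with no hypothesis on $\mathfrak{n}$ beyond $\mathfrak{h}$-stability you have no control over $\pi_{\mathfrak{m}}([\mathfrak{n},\mathfrak{m}])$ (for $LY_4$ the explicit formulas show $U\cdot U\not\subseteq U$), and the Lie ideal of $\mathfrak{g}$ generated by any nonzero subspace is all of $\mathfrak{g}$, so no contradiction can emerge. The correct starting object is a nonzero ideal $\mathfrak{n}$ of the algebra $\mathfrak{m}$, i.e.\ a subspace satisfying $\pi_{\mathfrak{m}}([\mathfrak{n},\mathfrak{m}])\subseteq\mathfrak{n}$ and $[\,\pi_{\mathfrak{h}}([\mathfrak{n},\mathfrak{m}]),\mathfrak{m}\,]\subseteq\mathfrak{n}$; for such $\mathfrak{n}$ the subspace $\pi_{\mathfrak{h}}([\mathfrak{n},\mathfrak{m}])\oplus\mathfrak{n}$ really is an ideal of $\mathfrak{g}$ (one also uses $\mathfrak{h}=\pi_{\mathfrak{h}}([\mathfrak{m},\mathfrak{m}])$, which holds because $\mathfrak{m}+[\mathfrak{m},\mathfrak{m}]$ is a nonzero ideal of the simple $\mathfrak{g}$), and simplicity of $\mathfrak{g}$ then forces $\mathfrak{n}=\{0\}$ or $\mathfrak{n}=\mathfrak{m}$. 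This is the Sagle--Winter argument that the citation points to.
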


\begin{proposition} \label{pr:envueltasimple}
Let $\mathfrak{g} = \mathfrak{h} \oplus \mathfrak{m}$ be a reductive decomposition of a simple Lie algebra with
$\mathfrak{m} \ne \{0\}$.
Then for the LY algebra structure on $\mathfrak{m}$ defined by \eqref{standardLY}, we have
$\mathfrak{h} \cong D(\mathfrak{m})$ (the inner derivation algebra) and
$\mathfrak{g} \cong D(\mathfrak{m}) \oplus \mathfrak{m}$ (the standard envelope).
Furthermore, if $\mathfrak{h}$ is semisimple and $\mathfrak{m}$ is an irreducible $\mathfrak{h}$-module,
then either $\mathfrak{h} \cong \mathfrak{m}$ as $\mathrm{ad}_\mathfrak{h}$-modules, or
$\mathfrak{m} = \mathfrak{h}^\perp$ with respect to the Killing form of $\mathfrak{g}$.
\end{proposition}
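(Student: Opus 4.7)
The plan is to treat the three claims in order: for $\mathfrak{h}\cong D(\mathfrak{m})$ and for the standard envelope statement the essential tool is simplicity of $\mathfrak{g}$, while the last dichotomy comes from Schur's lemma applied through the Killing form.

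To prove $\mathfrak{h}\cong D(\mathfrak{m})$, first I would analyse the kernel $\mathfrak{h}_{0} = \{h\in\mathfrak{h} : [h,\mathfrak{m}] = 0\}$ of $\mathrm{ad}|_{\mathfrak{m}}$. A single Jacobi identity, using $[\mathfrak{h},\mathfrak{m}]\subseteq\mathfrak{m}$, gives $[\mathfrak{h},\mathfrak{h}_{0}]\subseteq\mathfrak{h}_{0}$, while $[\mathfrak{m},\mathfrak{h}_{0}]=0$ is immediate, so $\mathfrak{h}_{0}$ is an ideal of $\mathfrak{g}$; simplicity and $\mathfrak{m}\neq 0$ force $\mathfrak{h}_{0}=0$, i.e.\ $\mathrm{ad}|_{\mathfrak{m}}$ is injective on $\mathfrak{h}$. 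Next, setting $\mathfrak{h}_{1} = \pi_{\mathfrak{h}}([\mathfrak{m},\mathfrak{m}])$, an analogous computation shows that $\mathfrak{h}_{1}\oplus\mathfrak{m}$ is an ideal of $\mathfrak{g}$, so $\mathfrak{h}_{1}=\mathfrak{h}$ again by simplicity. Combined, these two facts give $D(\mathfrak{m}) = \mathrm{ad}_{\mathfrak{h}}|_{\mathfrak{m}}\cong \mathfrak{h}$, and the identification $\mathfrak{g}\cong D(\mathfrak{m})\oplus\mathfrak{m}$ as Lie algebras is then a direct comparison of the standard-envelope bracket relations with those of $\mathfrak{g}$ under the definitions \eqref{standardLY}.

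For the final assertion, assume $\mathfrak{h}$ is semisimple and $\mathfrak{m}$ is irreducible as an $\mathfrak{h}$-module, and let $B$ denote the Killing form of $\mathfrak{g}$, nondegenerate since $\mathfrak{g}$ is simple. Because $\mathfrak{h}$ is semisimple, $\mathfrak{h}^{\ast}\cong\mathfrak{h}$ as $\mathfrak{h}$-modules via the Killing form of $\mathfrak{h}$, so the $\mathfrak{h}$-invariant pairing $B\colon\mathfrak{m}\times\mathfrak{h}\to\field$ is encoded by an $\mathfrak{h}$-equivariant map $\phi\colon\mathfrak{m}\to\mathfrak{h}$. By Schur's lemma, either $\phi=0$, in which case $\mathfrak{m}\subseteq\mathfrak{h}^{\perp}$ and hence $\mathfrak{m}=\mathfrak{h}^{\perp}$ by the dimension count $\dim\mathfrak{m}=\dim\mathfrak{g}-\dim\mathfrak{h}=\dim\mathfrak{h}^{\perp}$; or $\phi$ embeds $\mathfrak{m}$ as an irreducible $\mathfrak{h}$-submodule of $\mathfrak{h}$, which must be a simple ideal $\mathfrak{h}_{i}$ of $\mathfrak{h}$. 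In this second case I must still show $\mathfrak{h}=\mathfrak{h}_{i}$, and this is the only real subtlety: if $\mathfrak{h}_{j}$ were any other simple ideal, then $[\mathfrak{h}_{j},\mathfrak{h}_{i}]=0$ together with $\mathfrak{h}$-equivariance of $\phi$ would force $[\mathfrak{h}_{j},\mathfrak{m}]=0$, and then $\mathfrak{h}_{j}$ would be a nonzero ideal of $\mathfrak{g}$, contradicting simplicity. Thus $\mathfrak{h}$ is simple and $\mathfrak{m}\cong\mathfrak{h}$ as $\mathrm{ad}_{\mathfrak{h}}$-modules, completing the dichotomy. The substance of the proposition is concentrated in these three ``hidden ideal of $\mathfrak{g}$'' arguments (for $\mathfrak{h}_{0}$, for $\mathfrak{h}_{1}\oplus\mathfrak{m}$, and for $\mathfrak{h}_{j}$); everything else reduces to Schur's lemma and a dimension count.
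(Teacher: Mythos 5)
Your argument is correct. Note that the paper itself offers no proof of this proposition: it is quoted with references to \cite{SaWi67} and \cite{BEM09}, so there is nothing internal to compare against. Your three ``hidden ideal'' arguments are exactly the right mechanism: $\mathfrak{h}_0=\{h\in\mathfrak{h}: [h,\mathfrak{m}]=0\}$ is an ideal of $\mathfrak{g}$ (giving faithfulness of $\mathrm{ad}_{\mathfrak{h}}|_{\mathfrak{m}}$, hence $\mathfrak{h}\cong D(\mathfrak{m})$ once you know $D(\mathfrak{m})=\mathrm{ad}(\pi_{\mathfrak{h}}([\mathfrak{m},\mathfrak{m}]))|_{\mathfrak{m}}$), and $\pi_{\mathfrak{h}}([\mathfrak{m},\mathfrak{m}])\oplus\mathfrak{m}$ is an ideal containing $\mathfrak{m}\neq 0$ (giving surjectivity). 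For the dichotomy, routing the invariant pairing $B|_{\mathfrak{m}\times\mathfrak{h}}$ through $\mathfrak{h}^{\ast}\cong\mathfrak{h}$ via the Killing form of $\mathfrak{h}$ is a clean way to avoid having to prove that $B|_{\mathfrak{h}\times\mathfrak{h}}$ is nondegenerate, and your final step --- that a nonzero $\phi$ forces $\phi(\mathfrak{m})$ to be a simple ideal $\mathfrak{h}_i$ and that any complementary simple ideal $\mathfrak{h}_j$ would annihilate $\mathfrak{m}$ and hence be an ideal of $\mathfrak{g}$ --- correctly closes the only loophole. Two minor points you wave at but should be aware of: the identification $\mathfrak{g}\cong D(\mathfrak{m})\oplus\mathfrak{m}$ as Lie algebras uses that (LY6) makes the standard-envelope bracket on $D(\mathfrak{m})$ agree with the operator commutator, and the equivariance of $\phi$ uses the invariance of the Killing form of $\mathfrak{g}$ under $\mathrm{ad}_{\mathfrak{h}}$; both are immediate but worth one line each in a written version.
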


In the rest of this section we explain different constructions of the reductive pairs
$\big(\mathfrak{so}(H_n(\field)_0,T), \mathrm{Der}\,H_n(\field) \big)$ and the associated LY algebras
using representation theory of Lie algebras and structure theory of Jordan algebras.

\subsection{LY algebras from the viewpoint of representation theory} \label{subsection:LY-representation}

Let $\mathfrak{h}$ be a simple Lie algebra and let $\mathfrak{m}$ be an irreducible $\mathfrak{h}$-module.
If the exterior square $\Lambda^2 \mathfrak{m}$ contains both $\mathfrak{m}$ and the adjoint $\mathfrak{h}$-module,
then there are $\mathfrak{h}$-module morphisms
  \[
  \beta\colon \Lambda^2 \mathfrak{m} \longrightarrow \mathfrak{m},
  \qquad
  \tau\colon \Lambda^2 \mathfrak{m} \longrightarrow \mathfrak{h},
  \]
which give bilinear and trilinear products on $\mathfrak{m}$ defined by
  \[
  [x,y] = \beta(x,y),
  \qquad
  \langle x,y,z \rangle = \tau(x,y) \cdot z,
  \]
where $\cdot$ is the action of $\mathfrak{h}$ on $\mathfrak{m}$.
Since $\beta$ and $\tau$ have domain $\Lambda^2 \mathfrak{m}$, these products satisfy
the skew-symmetries (LY1) and (LY2) in the definition of an LY algebra:
  \[
  [x,x] = 0,
  \qquad
  \langle x,x,y \rangle = 0.
  \]
Identities (LY5) and (LY6) hold because $\beta$ and $\tau$ are $\mathfrak{h}$-module morphisms,
so $\mathfrak{h}$ acts as derivations of both products.
Thus a Lie algebra structure can be defined on $\mathfrak{h} \oplus \mathfrak{m}$ if and only if
identities (LY3) and (LY4) are satisfied.
If $\mathfrak{h} \oplus \mathfrak{m}$ cannot be made into a Lie algebra, then these products
can still be defined, but we will obtain a different class of algebras closely related to LY algebras.

We can refine this approach by considering a simple Lie algebra $\mathfrak{h}$ and a faithful representation
$\rho\colon \mathfrak{h} \to \mathrm{End}_\field(V)$.
We have the embeddings
  \[
  \mathfrak{h}\cong \rho(\mathfrak{h})\subseteq \mathfrak{sl}(V)\subseteq \mathfrak{gl}(V)\cong V\otimes V^*,
  \]
which product reductive pairs $(\mathfrak{sl}(V),\mathfrak{h})$ and the associated LY algebras.
If the $\mathfrak{h}$-module $V$ has an $\mathfrak{h}$-invariant symmetric or skew-symmetric bilinear form
then $\mathfrak{h}$ embeds into either $\mathfrak{so}(V)$ or $\mathfrak{sp}(V)$, and we obtain
reductive pairs $(\mathfrak{so}(V), \mathfrak{h})$ or $(\mathfrak{sp}(V), \mathfrak{h})$.
We will use this approach to understand the structure of the LY algebras associated to the
reductive pairs $(\mathfrak{so}_N(\field), \mathfrak{so}_n(\field))$.

We recall basic facts about orthogonal Lie algebras.
Let $V$ be an $n$-dimensional vector space ($n \ge 2$) over $\field$ with a nondegenerate symmetric bilinear form $\varphi$.
Fixing an orthonormal basis of $V$, we may assume $V = \field^n$ and $\varphi(x,y) = \sum_{i=1}^n x_i y_i$.
If $n = 2$ then $\mathfrak{so}_2(\field) \cong \field$ is the 1-dimensional abelian Lie algebra.
If $n = 3$ then $\mathfrak{so}_3(\field) \cong \mathfrak{sl}_2(\field)$ (that is, $B_1 = A_1$),
the 3-dimensional simple Lie algebra
(recall that $\field$ is algebraically closed).
If $n = 4$ then $\mathfrak{so}_4(\field) \cong \mathfrak{so}_3(\field)\oplus \mathfrak{so}_3(\field)$
(that is, $D_2 = A_1 \cup A_1$) is the direct sum of two simple ideals.
If $n = 5$ then $\mathfrak{so}_5(\field) \cong \mathfrak{sp}_4(\field)$ (that is, $B_2 = C_2$)
can also be defined by a nondegenerate skew-symmetric bilinear form in 4 dimensions.
If $n = 6$ then $\mathfrak{so}_6(\field) \cong \mathfrak{sl}_4(\field)$ (that is, $D_3 = A_3$),
the $4 \times 4$ matrices of trace 0.
For $n \ge 7$, the orthogonal Lie algebras are simple of types $B_k$ ($n=2k+1$) or $D_k$ ($n=2k$).

\begin{table}[h]
\[
\begin{array}{l|ll}
\mathfrak{so}_n(\field) &
\text{Adjoint: dimension $\binom{n}{2}$} &
\text{Natural: dimension $n$}
\\
\midrule
n = 3 &
V(2\lambda_1) &
V(2\lambda_1)
\\[3pt]
n = 4 &
\big( V(2\lambda_1) \otimes \field \big) \oplus \big( \field \otimes V(2\lambda_1') \big) &
V(\lambda_1)\otimes V(\lambda_1')
\\[3pt]
n = 5 &
V(2\lambda_2) &
V(\lambda_1)
\\[3pt]	
n = 6 &
V(\lambda_2+\lambda_3) &
V(\lambda_1)
\\[3pt]
n \ge 7 &
V(\lambda_2)&V(\lambda_1)
\\
\midrule
\end{array}
\]
\caption{Representations of orthogonal Lie algebras}
\label{orthogonalrepresentations}
\end{table}

We write $V(\lambda)$ for the simple $\mathfrak{so}_n(\field)$-module of highest weight $\lambda$;
we use the labelling of the fundamental weights given by \cite{Hu72}.
In the non-simple case $\mathfrak{so}_4(\field) \cong \mathfrak{so}_3(\field) \oplus \mathfrak{so}_3(\field)$,
we use $\lambda$ and $\lambda'$ for the weights for the first and second components.
We record the structure of the adjoint and natural $\mathfrak{so}_n(\field)$-modules in
Table \ref{orthogonalrepresentations}.

\begin{lemma} \label{representationlevel}
For $n \ge 3$, the symmetric square $S^2 \, \field^n$ of the natural $\mathfrak{so}_n(\field)$-module
has dimension $\binom{n+1}{2}$ and decomposes as follows:
\begin{alignat*}{2}
n &= 3\colon &\qquad
S^2 \, \field^3 &= V(4\lambda_1) \oplus V(0)
\\
n &= 4\colon &\qquad
S^2 \, \field^4 &= \big( V(2\lambda_1) \otimes V(2\lambda_1') \big) \oplus \big( V(0) \otimes V(0) \big)
\\
n &\ge 5\colon &\qquad
S^2 \, \field^n &= V(2\lambda_1) \oplus V(0)
\end{alignat*}
\end{lemma}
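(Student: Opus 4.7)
The plan is to compute the dimension once and for all, split off the trivial summand using the invariant form, and then identify the remaining irreducible summand, handling the two exceptional values $n=3,4$ separately via the table's isomorphisms.

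First, the dimension statement is immediate from elementary multilinear algebra: $\dim S^2 V = \binom{\dim V+1}{2}$. To locate the trivial submodule, I would use the nondegenerate invariant form $\varphi$ on $\mathbb{K}^n$: for an orthonormal basis $e_1,\dots,e_n$ the element $c = \sum_{i=1}^n e_i \cdot e_i \in S^2 \mathbb{K}^n$ is $\mathfrak{so}_n$-invariant (because $\varphi$ is invariant), and the contraction $\gamma\colon S^2\mathbb{K}^n \to \mathbb{K}$, $x\cdot y \mapsto \varphi(x,y)$, is a nonzero $\mathfrak{so}_n$-equivariant map with $\gamma(c) = n \ne 0$. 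This gives a direct sum decomposition $S^2 \mathbb{K}^n = \mathbb{K} c \oplus \ker \gamma$, with $\ker \gamma$ of dimension $\binom{n+1}{2}-1$ being the traceless symmetric tensors. So in every case the summand $V(0)$ is accounted for.

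For $n=3$, the table tells us the natural module is $V(2\lambda_1)$ for $\mathfrak{so}_3 \cong \mathfrak{sl}_2$, and Clebsch--Gordan yields $V(2\lambda_1)\otimes V(2\lambda_1)= V(4\lambda_1)\oplus V(2\lambda_1)\oplus V(0)$, whose symmetric part is $V(4\lambda_1)\oplus V(0)$, of dimension $5+1=6=\binom{4}{2}$, as required. For $n=4$, the natural module is $V(\lambda_1)\otimes V(\lambda_1')$ for $\mathfrak{so}_4\cong \mathfrak{sl}_2\oplus \mathfrak{sl}_2$, and I would apply the standard functorial identity
\[
S^2(V\otimes W) \;=\; \bigl(S^2 V \otimes S^2 W\bigr) \oplus \bigl(\Lambda^2 V \otimes \Lambda^2 W\bigr).
\]
Since $V(\lambda_1)$ and $V(\lambda_1')$ are both $2$-dimensional, $S^2 V(\lambda_1) = V(2\lambda_1)$ and $\Lambda^2 V(\lambda_1) = V(0)$ (and similarly for the primed factor), so the formula produces exactly $\bigl(V(2\lambda_1)\otimes V(2\lambda_1')\bigr)\oplus\bigl(V(0)\otimes V(0)\bigr)$.

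For $n\ge 5$, the task is to show that $\ker \gamma$ is the irreducible module $V(2\lambda_1)$. Choose a Cartan subalgebra $\mathfrak{t}\subset \mathfrak{so}_n$ so that the weight of $e_1$ on the natural module is the fundamental weight $\lambda_1$. Then the vector $e_1\cdot e_1$ lies in $\ker \gamma$ (since $\varphi(e_1,e_1)=0$ in the isotropic basis one uses to diagonalise $\mathfrak{t}$, or, equivalently, after subtracting a multiple of $c$), has weight $2\lambda_1$, and is annihilated by every positive root vector because these kill $e_1$. Thus $\ker \gamma$ contains a copy of $V(2\lambda_1)$. The main obstacle is showing this copy fills $\ker \gamma$; the cleanest route is the Weyl dimension formula applied to $2\lambda_1$ for types $B_k$ and $D_k$, which yields $\dim V(2\lambda_1) = \binom{n+1}{2}-1$ and forces equality. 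An alternative that avoids the Weyl formula is to analyse weights directly: every weight of $\ker \gamma$ has the form $\pm\varepsilon_i\pm\varepsilon_j$ or $0$, each nonzero weight appears with multiplicity one, and the zero-weight space has dimension $k-1$ (where $k=\lfloor n/2\rfloor$); matching this weight multiset against the known weights of $V(2\lambda_1)$ confirms irreducibility and completes the decomposition.
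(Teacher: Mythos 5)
Your proof is correct, but it takes a genuinely different route from the paper's. The paper's argument is much shorter: it observes (as you do) that the invariant form forces a trivial summand $V(0)$ inside $S^2\,\field^n$, and then, for $n\ne 4$, simply cites Aslaksen's theorem on summands of tensor products together with a dimension count; for $n=4$ it defers to the explicit $\mathfrak{so}_3\oplus\mathfrak{so}_3$ computation carried out later in the paper. You instead make everything self-contained: the explicit invariant $c=\sum e_i\cdot e_i$ and the contraction $\gamma$ splitting off $V(0)$; Clebsch--Gordan for $n=3$; the functorial identity $S^2(V\otimes W)=(S^2V\otimes S^2W)\oplus(\Lambda^2V\otimes\Lambda^2W)$ for $n=4$ (a cleaner packaging of what the paper's Example does); and for $n\ge 5$ the highest weight vector $e_1\cdot e_1$ of weight $2\lambda_1$ in $\ker\gamma$ combined with the Weyl dimension formula $\dim V(2\lambda_1)=\binom{n+1}{2}-1$. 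What your approach buys is independence from the external citation and an explicit identification of the highest weight vector; what the paper's buys is brevity. One small caveat: in your parenthetical ``alternative'' weight-counting argument for $n\ge 5$, the asserted weight multiset is only right for type $D_k$. For $B_k$ (odd $n$) the natural module has a zero weight, so $S^2\,\field^n$ also has weights $\pm\varepsilon_i$ and its zero-weight space has dimension $k+1$, giving multiplicity $k$ (not $k-1$) in $\ker\gamma$. This does not affect your main line of argument via the Weyl dimension formula, which is sound as written.
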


\begin{proof}
We consider the tensor square of the natural module,
  \[
  \field^n \otimes \field^n = S^2 \, \field^n \oplus \Lambda^2 \, \field^n.
  \]
Since $\field^n$ has an invariant nondegenerate symmetric bilinear form, $V(0)$ is a submodule of $S^2 \, \field^n$.
For $n \ne 4$, the stated decompositions then follow from \cite[Th.~5]{As94} and a dimension count.
For $n = 4$, since $\mathfrak{so}_4(\field) = \mathfrak{so}_3(\field) \oplus \mathfrak{so}_3(\field)$,
the simple $\mathfrak{so}_4(\field)$-modules are tensor products $V \otimes V'$ of
simple $\mathfrak{so}_3(\field)$-modules.
Denoting by $\lambda$ and $\lambda'$ the fundamental weights of the two copies of $\mathfrak{so}_3(\field)$
inside $\mathfrak{so}_4(\field)$, the result follows from the calculation given in detail in Example \ref{m4} below.
\end{proof}

For $n \ge 3$ we write $Q_n$ for the quotient of the $\mathfrak{so}_n(\field)$-module $S^2 \, \field^n$ by
its trivial 1-dimensional submodule; we have $\dim Q_n = N$.

\begin{lemma} \label{lemma2}
For $n \ge 3$, the exterior square $\Lambda^2 \, Q_n$ has dimension $\binom{N}{2}$ and decomposes as follows:
\begin{alignat*}{2}
n &= 3\colon &\qquad
\Lambda^2 \, Q_3 &= V(2\lambda_1)\oplus V(6\lambda_1)
\\
n &= 4\colon &\qquad
\Lambda^2 \, Q_4 &=
\big( V(2\lambda_1) \otimes \field \big) \oplus
\big( V(2\lambda_1)\otimes V(4\lambda_1') \big) \oplus {}
\\
&&&\quad\;
\big( \field \otimes V(2\lambda_1') \big)
\oplus
\big( V(4\lambda_1)\otimes V(2\lambda_1') \big)
\\
n &= 5\colon &\qquad
\Lambda^2 \, Q_5 &= V(2\lambda_2)\oplus V(2\lambda_1+2\lambda_3)
\\
n &= 6\colon &\qquad
\Lambda^2 \, Q_6 &= V(\lambda_2+\lambda_3)\oplus V(2\lambda_1+\lambda_2+\lambda_3)
\\
n &\ge 7\colon &\qquad
\Lambda^2 \, Q_n &= V(\lambda_2)\oplus V(2\lambda_1+\lambda_2)
\end{alignat*}
\end{lemma}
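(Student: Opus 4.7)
The plan is to lift the problem to the $GL_n$-level using the standard plethysm identity $\Lambda^2 (S^2 \mathbb{K}^n) \cong \mathbb{S}_{(3,1)} \mathbb{K}^n$, restrict to $\mathfrak{so}_n$ via Littlewood's branching rule, and extract $\Lambda^2 Q_n$ by subtracting the contribution of the trivial line in $S^2 \mathbb{K}^n$. Concretely, Lemma \ref{representationlevel} gives $S^2 \mathbb{K}^n = Q_n \oplus V(0)$, hence
\[
\Lambda^2 Q_n \;\oplus\; Q_n \;\cong\; \Lambda^2 S^2 \mathbb{K}^n \bigr|_{\mathfrak{so}_n} \;\cong\; \mathbb{S}_{(3,1)} \mathbb{K}^n \bigr|_{\mathfrak{so}_n}.
\]
The low-rank cases $n = 3, 4$ are handled separately: for $n = 3$ the partition $(3,1)$ does not fit the rank cleanly, and for $n = 4$ the module $Q_4$ is not $V(2\lambda_1)$ but a tensor product under the splitting $\mathfrak{so}_4 = \mathfrak{sl}_2 \oplus \mathfrak{sl}_2$.

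For $n = 3$, $Q_3 = V(4\lambda_1)$ is the 5-dimensional $\mathfrak{sl}_2$-module. The Clebsch-Gordan decomposition
\[
V(4\lambda_1)^{\otimes 2} \;=\; V(8\lambda_1) \oplus V(6\lambda_1) \oplus V(4\lambda_1) \oplus V(2\lambda_1) \oplus V(0)
\]
splits by the standard parity rule into $S^2 V(4\lambda_1) = V(8\lambda_1) \oplus V(4\lambda_1) \oplus V(0)$ and $\Lambda^2 V(4\lambda_1) = V(6\lambda_1) \oplus V(2\lambda_1)$, as claimed. For $n = 4$, writing $Q_4 = V(2\lambda_1) \otimes V(2\lambda_1')$ and applying
\[
\Lambda^2 (A \otimes B) \;=\; (\Lambda^2 A \otimes S^2 B) \oplus (S^2 A \otimes \Lambda^2 B)
\]
with $\Lambda^2 V(2\lambda_1) = V(2\lambda_1)$ and $S^2 V(2\lambda_1) = V(4\lambda_1) \oplus V(0)$ (and symmetrically for the primed factor) produces the four summands in the statement.

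For $n \ge 5$, Littlewood's branching formula gives $\mathbb{S}_{(3,1)} \mathbb{K}^n |_{O_n} = \bigoplus_{\mu,\delta} c^{(3,1)}_{\mu, 2\delta} V_\mu$, summed over partitions $2\delta$ with even parts and $|\mu| + |2\delta| = 4$. A short Littlewood-Richardson check over $2\delta \in \{(0), (2), (4), (2,2)\}$ yields nonzero contributions only from $c^{(3,1)}_{(3,1),(0)} = c^{(3,1)}_{(2),(2)} = c^{(3,1)}_{(1,1),(2)} = 1$, so
\[
\mathbb{S}_{(3,1)} \mathbb{K}^n \bigr|_{O_n} \;=\; V_{(3,1)} \oplus V_{(2)} \oplus V_{(1,1)}.
\]
Since $V_{(2)} = V(2\lambda_1) = Q_n$, subtraction gives $\Lambda^2 Q_n = V_{(3,1)} \oplus V_{(1,1)}$. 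Translating into fundamental weights for $n \ge 7$, $V_{(3,1)} = V(2\lambda_1 + \lambda_2)$ and $V_{(1,1)} = V(\lambda_2)$, which recovers the generic decomposition. For $n = 5$ (type $B_2$) and $n = 6$ (type $D_3$), the same $\epsilon$-weights $3\epsilon_1 + \epsilon_2$ and $\epsilon_1 + \epsilon_2$ must be re-expressed via the low-rank identifications $2\lambda_2^{B_2} = \epsilon_1 + \epsilon_2 = \lambda_2^{D_3} + \lambda_3^{D_3}$, producing the claimed $V(2\lambda_2)$, $V(\lambda_2 + \lambda_3)$, and the corresponding large summands.

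The main obstacle will be the finite combinatorial check of the Littlewood-Richardson coefficients, in particular confirming that no spurious summand appears from $2\delta = (4)$ or $(2,2)$, together with the careful partition-to-fundamental-weight translation at $n = 5, 6$ where the exceptional isomorphisms $B_2 = C_2$ and $D_3 = A_3$ complicate the labeling. A Weyl-dimension-formula comparison against $\binom{N}{2}$ provides an independent sanity check against either kind of slip.
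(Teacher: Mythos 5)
Your argument is correct, but it follows a genuinely different route from the paper's. The paper disposes of this lemma in one line (``similar to the proof of Lemma \ref{representationlevel}''), which means: for $n \ne 4$ quote Aslaksen's theorem on summands of tensor products of representations of the classical Lie algebras together with a dimension count, and for $n = 3,4$ compute directly with $\mathfrak{sl}_2$ Clebsch--Gordan. You instead lift to $GL_n$ via the plethysm $\Lambda^2(S^2\field^n) \cong \mathbb{S}_{(3,1)}\field^n$, restrict by Littlewood's rule, and peel off the $Q_n$ summand coming from $\Lambda^2(Q_n \oplus V(0)) = \Lambda^2 Q_n \oplus Q_n$; your Littlewood--Richardson bookkeeping is right (only $2\delta = (0)$ and $(2)$ contribute, giving $V_{(3,1)} \oplus V_{(2)} \oplus V_{(1,1)}$, and $\ell(3,1)=2 \le \lfloor n/2\rfloor$ keeps you in the stable range for $n \ge 5$, after which the $\varepsilon$-weight translations at $n=5,6$ and the separate $\mathfrak{sl}_2$ computations at $n=3,4$ match the statement). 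What your approach buys is a self-contained, uniform derivation that exposes \emph{why} exactly two summands survive for $n \ge 5$ and that works verbatim for the exceptional isomorphisms; what it costs is the Littlewood--Richardson verification and the stable-range hypothesis, which the paper avoids by outsourcing to a citation. One incidental payoff of your computation: for $n=5$ the large summand has highest weight $3\varepsilon_1+\varepsilon_2 = 2\lambda_1+2\lambda_2$ in type $B_2$, so the paper's $V(2\lambda_1+2\lambda_3)$ is evidently a typo for $V(2\lambda_1+2\lambda_2)$ ($B_2$ has only two fundamental weights); it would be worth stating that large summand explicitly rather than leaving it implicit.
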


\begin{proof}
Similar to the proof of Lemma \ref{representationlevel}.
\end{proof}

The decompositions in Lemma \ref{lemma2} have a natural multilinear interpretation.
The orthogonal Lie algebra $\mathfrak{so}_n(\field)$ consists of the $n \times n$ skew-symmetric matrices
(with respect to the symmetric bilinear form given by the identity matrix $I_n$).
The space of all $n \times n$ matrices decomposes as the direct sum of symmetric matrices and skew-symmetric matrices:
\begin{equation}
\label{symmskew}
M_n(\field) = \mathfrak{so}_n(\field) \oplus H_n(\field) =
\mathfrak{so}_n(\field) \oplus \big( H_n(\field)_0 \oplus \field I_n \big).
\end{equation}
Using the commutator, $M_n(\field)$ becomes a $\mathbb{Z}_2$-graded Lie algebra
with the simple Lie algebra $\mathfrak{so}_n(\field)$ as the even part and $H_n(\field)$ as the odd part.
Using the anti-commutator, $M_n(\field)$ becomes a $\mathbb{Z}_2$-graded Jordan algebra
with the simple Jordan algebra $H_n(\field)$ as the even part and $\mathfrak{so}_n(\field)$ as the odd part.
The generic trace of the Jordan algebra $H_n(\field)$ is non-degenerate when restricted to the subspace $H_n(\field)_0$
which has dimension $N$; the skew-symmetric $N \times N$ matrices with respect to this trace form
define the orthogonal Lie algebra $\mathfrak{so}(N,\field)$.
The Jordan algebra $H_n(\field)$ is a realization of $S^2\,\field^n$, and $Q_n = H_n(\field)_0$ is then a realization of
the $n \times n$ symmetric matrices with trace 0.
In this way, the LY algebras associated to this sequence of orthogonal reductive pairs are seen to be closely related
to Lie and Jordan algebras.
From the  classification of irreducible LY algebras \cite{BEM11}, we see that for $n \ge 7$ there is only one way to define
binary and ternary products on the $\mathfrak{so}_n(\field)$-module $V(2\lambda_1+\lambda_2)$ to obtain the structure of
an LY algebra.

\subsection{From Jordan algebras to LY algebras}

Recall that the field $\field$ is assumed to be algebraically closed.
We generalize the $\mathbb{Z}_2$-grading of $M_n(\field) = \mathfrak{so}_n(\field) \oplus H_n(\field)$
given by the trace form $\mathrm{tr}(a)$ which is symmetric and non-degenerate.
We consider an $n$-dimensional vector space $V$ with a nondegenerate symmetric bilinear form $\varphi$.
Using $\varphi$ we may identify $V$ and its dual $V^\ast$ by $f \leftrightarrow f^\ast$ where $f^\ast$ is
defined as usual by $\varphi(f^\ast(x),y)=\varphi(x,f(y))$.
We have $\mathrm{End}_\field(V) \cong V^\ast \otimes_\field V$;
interchanging the tensor factors gives $V \otimes_\field V^\ast$,
and using $\varphi$ we can identify this with $\mathrm{End}_\field(V)$.
This gives us an involution on the Lie algebra $\mathfrak{gl}(V)$, which is $\mathrm{End}_\field(V)$ using the
commutator, and from this we obtain the decomposition
\begin{equation}
\label{generalsymmskew}
\mathfrak{gl}(V) = \mathsf{Skew}(V,\varphi) \oplus \mathsf{Sym}(V,\varphi)
\end{equation}
where $\mathsf{Sym}(V,\varphi)$ and $\mathsf{Skew}(V,\varphi)$ are respectively the symmetric and skew-symmetric
endomorphisms defined by $f^*=f$ and $f^* = -f$.
If we fix an orthonormal basis of $V$ with respect to $\varphi$ we obtain the matrix decomposition \eqref{symmskew}.

We consider the following binary and ternary products on $H_n(\field)$:
  \begin{alignat*}{2}
  &\text{Lie bracket} &\qquad [a,b] &= ab-ba
  \\
  &\text{Jordan product} &\qquad a \circ b &= ab+ba
  \\
  &\text{Lie triple product} &\qquad \langle a,b,c \rangle
  &= (b,c,a)_\circ
  \\
  &\text{Jordan triple product} &\qquad \{a,b,c\} &= abc+ cba
  \end{alignat*}
where $(b,c,a)_\circ = (a \circ b) \circ c - a \circ (b \circ c)$ is the Jordan associator
(cyclically permuted).

We recall some basic results, proofs of which may be found in \cite{MH}.
The subspace $H_n(\field)$ is a simple Jordan algebra using the $\circ$ product.
The generic trace of this Jordan algebra is the matrix trace $\mathrm{tr}(a)$,
and $T(a,b) = \mathrm{tr}(ab)$ is a non-degenerate bilinear form, which is associative in the sense that
$T(a\circ b,c)=T(a,b\circ c)$.
The subspace $H_n(\field)$ is a simple Jordan triple system using the brace product $\{\,,\,,\}$.

The linear map $d_{a,b} \in \mathrm{End}(H_n(\field))$ defined by
$d_{a,b}(c) = (b,c,a)_\circ = [L_a,L_b](c)$ where $L_a(b) = a \circ b$
is a derivation of both the Jordan product and the Jordan triple product;
linear combinations of the maps $d_{a,b}$ are called inner derivations.
The full derivation algebras of $H_n(\field)$ with respect to both products coincide and so we may write
$\mathrm{Der}\, H_n(\field)$.
Every derivation is an inner derivation: we have
  \[
  \mathrm{Der}\, H_n(\field)
  =
  \mathrm{span}( \, [L_a,L_b] \mid a,b\in H_n(\field) \, )
  =
  \{ \, \mathrm{ad}_a \mid a \in M_n(\field), \; a^t=-a \, \},
  \]
where $\mathrm{ad}_a(b) = [a,b]$.
In fact, as Lie algebras we have $\mathrm{Der}\, H_n(\field) \cong \mathfrak{so}_n(\field)$,
which is simple for $n \ne 4$.

For any $d \in \mathrm{Der}\, H_n(\field)$ we have $T(d(a),b) + T(a,d(b)) = 0$;
thus the trace form $T$ is invariant under derivations.
The direct sum $H_n(\field) = H_n(\field)_0 \oplus \field I_n$ is orthogonal with respect to $T$.
If $d \in \mathrm{Der} \, H_n(\field)$ then $d(I_n) = 0$ and $d(H_n(\field)) \subseteq H_n(\field)_0$;
hence $H_n(\field)_0$ is a $\mathrm{Der}\, H_n(\field)$-module isomorphic to $V(2\lambda_1)$
as an $\mathfrak{so}_n(\field)$-module.

We have $(b,c,a)_\circ=[[a,b],c]$, and therefore $[[L_a,L_b],L_c]=L_{(b,c,a)_\circ}$
The subspace $H_n(\field)$ is a Lie triple system with respect to the product $\langle \, , \, ,\rangle$.
The decomposition $H_n(\field) = H_n(\field)_0 \oplus \field I_n$ is a direct sum of ideals, and
$H_n(\field)_0$ is a simple Lie triple system.
For any $a,b \in H_n(\field)_0$, the linear map $d_{a,b} = (b,-,a)_\circ$ is a derivation of the Lie triple product
on $H_n(\field)$; the full derivation algebra coincides with $\mathrm{Der}\, H_n(\field)$.
This derivation algebra is a subalgebra of $\mathfrak{so}(H_n(\field)_0,T) \cong \mathfrak{so}_N(\field)$,
the Lie algebra of endomorphisms $d$ of $H_n(\field)_0$ which are orthogonal with respect to $T$,
in the sense that $T(d(a),b) + T(a,d(b)) = 0$ for all $a, b \in H_n(\field)_0$.
The pair
  \[
  \big( \, \mathfrak{so}(H_n(\field)_0,T), \, \mathrm{Der}\, H_n(\field) \, \big)
  =
  \big( \, \mathfrak{so}_N(\field), \, \mathfrak{so}_n(\field) \, \big)
  \]
is a reductive pair and the orthogonal complement $LY_n = (\mathfrak{so}_n(\field))^\perp$
with respect to the Killing form of $\mathfrak{so}_N(\field)$
is the unique $\mathrm{Der}\, H_n(\field)$-invariant complement to $\mathrm{Der}\, H_n(\field)$
in $\mathfrak{so}(H_n(\field)_0, T)$.
Hence, $LY_n$ is the unique standard LY algebra
associated to the reductive pair $(\mathfrak{so}(H_n(\field)_0, T),\mathrm{Der}\, H_n(\field))$.

For $n \ge 3$, the LY algebra $LY_n$ is simple with nontrivial binary and ternary products, and has dimension
  \[
  \dim LY_n
  =
  \binom{N}{2} - \binom{n}{2}
  =
  \binom{ \binom{n+1}{2}-1 }{2} - \binom{n}{2}
  =
  \frac18 (n-1)(n+1)(n-2)(n+4).
  \]
The derivation algebra $\mathrm{Der}\, LY_n$ is isomorphic to $\mathfrak{so}_n(\field)$.
For $n \ne 4$, $LY_n$ is irreducible as a module over its derivation algebra;
the highest weight of this module is given in Lemma \ref{lemma2}.
For $n = 4$, the same lemma shows that
  \begin{equation}
  \label{LY4decomposition}
  LY_4
  =
  U \oplus U'
  =
  \big( V(2\lambda_1)\otimes V(4\lambda_1') \big) \oplus \big( V(4\lambda_1)\otimes V(2\lambda_1') \big).
  \end{equation}
Thus $LY_4$ is a $2$-irreducible module over its derivation algebra and each summand is 15-dimensional.
We now consider some other constructions of $LY_3$ and $LY_4$ which have appeared in the literature.

\subsection{Three different approaches to $LY_3$}

In 1984, Dixmier \cite{Di84} constructed some simple nonassociative algebras using the 3-dimensional Lie algebra
$\mathfrak{sl}_2(\field)$, its irreducible modules, and transvections from classical invariant theory.
In particular, we recall from \cite[\S 6.2]{Di84} the 3-parameter family of simple Lie algebras of type $B_2$
given by a reductive decomposition:
  \begin{equation}\label{m3first construction}
  B_{3,\lambda,\mu,\nu} = V(2\lambda_1) \oplus V(6\lambda_1), \qquad 10\lambda\mu = 9\nu^2.
  \end{equation}
Since $V(2\lambda_1)$ is the adjoint module of $\mathfrak{sl}_2(\field) \cong \mathfrak{so}_3(\field)$
and $B_2 \cong \mathfrak{so}_5(\field)$, we obtain
  \begin{equation}\label{m2first construction}
  \mathfrak{g} = \mathfrak{so}_5(\field)=\mathfrak{sl}_2(\field) \oplus LY_3
  =
  \mathfrak{h} \oplus \mathfrak{m}.
  \end{equation}
This leads to the following explicit construction of $LY_3$.
For each $k \ge 0$, we consider the $(k{+}1)$-dimensional vector space
  \[
  P(k) = \mathrm{span}( x^{k-i} y^i \mid 0 \le i \le k ),
  \]
of homogeneous polynomials in $x, y$ of degree $n$.
We can identify $P(k)$ with the $\mathfrak{sl}_2(\field)$-module $V(k \lambda_1)$ if we define the action
by the differential operators,
  \[
  h = x \frac{\delta}{\delta x} - y\frac{\delta}{\delta y},
  \qquad
  e = x\frac{\delta}{\delta y},
  \qquad
  f = y\frac{\delta}{\delta x}.
  \]
The following equation defines a Lie algebra structure on $B_{3,6,15,10}$:
 \[
 [\ell_1+m_1,\ell_2+m_2] = 6 (\ell_1\ell_2)_1 + 15 (m_1m_2)_5 + 6 (\ell_1m_2)_1 + 6 (m_1\ell_2)_1 + 10(m_1m_2)_3,
 \]
where
  \begin{align*}
  (\ell_1\ell_2)_1
  &=
  \frac14
  \left(
  \frac{\delta \ell_1}{\delta x}\frac{\delta \ell_2}{\delta y}-\frac{\delta \ell_1}{\delta y}\frac{\delta \ell_2}{\delta x}
  \right),
  \\
  (\ell_1m_2)_1
  &=
  \frac{1}{12}
  \left(
  \frac{\delta \ell_1}{\delta x}\frac{\delta m_2}{\delta y}-\frac{\delta \ell_1}{\delta y}\frac{\delta m_2}{\delta x}
  \right),
  \qquad
  (m_1\ell_2)_1
  =
  -(\ell_2 m_1)_1
  \\
  (m_1m_2)_5
  &=
  \sum_{i=0}^5 (-1)^i{5\choose i} \,
  \frac{\delta^{5} m_1}{\delta x^{5-i} \, \delta y^i} \,
  \frac{\delta^{5} m_2}{\delta x^i \, \delta y^{5-i}}
  \\
  (m_1m_2)_3
  &=
  \sum_{i=0}^3 (-1)^i{3\choose i} \,
  \frac{\delta^{3} m_1}{\delta x^{3-i} \, \delta y^i} \,
  \frac{\delta^{3} m_2}{\delta x^i \, \delta y^{3-i}}
  \end{align*}
For $\ell_1 = \ell_2 = 0$ we obtain the Lie bracket of two elements in $LY_3$:
  \[
  [m_1,m_2] = 15 (m_1m_2)_5 + 10(m_1m_2)_3.
  \]
Since $(m_1m_2)_5\in \mathfrak{h}$ and $(m_1m_2)_3\in LY_3$, we apply equations \eqref{standardLY} to obtain
the binary and ternary products on $LY_3$:
  \[
  m_1\cdot m_2 = 10(m_1m_2)_3, \qquad \{m_1,m_2,m_3\} = 90 ((m_1m_2)_5 m_3)_1.
  \]

For a second approach using the representation theory of $\mathfrak{sl}_2(\field)$, see \cite{BD}.
The binary and ternary products of $LY_3$ were obtained using the decomposition
  \[
  \Lambda^2 V(6\lambda_1) = V(2\lambda_1) \oplus V(6\lambda_1) \oplus V(10\lambda_1).
  \]
The binary product is given (up to a scalar) by the projection
$\Lambda^2 V(6\lambda_1) \to V(6\lambda_1)$;
the ternary product is the composition of
$\Lambda^2 V(6\lambda_1) \to V(2\lambda_1) \cong \mathfrak{sl}_2(\field)$
with the action of $\mathfrak{sl}_2(\field)$ on $V(6\lambda_1)$.
Using computer algebra, the polynomial identities in low degree relating these products were studied.

The third approach to $LY_3$ is given by the isomorphism $\mathfrak{so}_5(\field) \cong \mathfrak{sp}_4(\field)$.
Following \cite{BEM11}, we have the reductive decomposition:
  \[
  \mathfrak{sp}_4(\field) = \mathrm{Der}\, \mathcal{T}_\field \oplus LY_3,
  \]
where $\mathcal{T}_\field$ is the symplectic triple system associated to the Jordan algebra of a nondegenerate cubic form
with basepoint given by the norm $n(\alpha) = \alpha^3$ on the base field $\field$.
The triple $\mathcal{T}_\field$ can be identified with the vector space $M_2(\field)$ of $2\times 2$ matrices;
for definitions and formulas for the triple product see \cite[Th.~2.21]{E06}
where a survey of these triples is also included.
Symplectic triple systems are closely related to Freudenthal triple systems, Lie triple systems and 5-graded Lie algebras.

\subsection{The 2-irreducible LY algebra $LY_4$} \label{m4}

Using multilinear algebra we can obtain a convenient model of this LY algebra.
Let $V_1 = V_2 = \field^3$ and let $\varphi_i$ be a nondegenerate symmetric bilinear form on $V_i$.
We define
  \[
  \mathsf{Sym}_0(V_i,\varphi_i)
  =
  \{ \, f \in \mathrm{End}_\field( V_i ) \mid \varphi_i(f(x),y) = \varphi_i(x,f(y)), \; \mathrm{tr}(f) = 0 \, \}.
  \]
On $V_1 \otimes V_2$ we have the nondegenerate symmetric bilinear form
$\varphi = \varphi_1 \otimes \varphi_2$ defined on simple tensors by
$\varphi( x_1 \otimes x_2, y_1 \otimes y_2 ) = \varphi_1( x_1, y_1 ) \varphi_2( x_2, y_2 )$.
Equation (\ref{generalsymmskew}) gives a decomposition of the Lie algebra $\mathfrak{gl}(V_i)$:
  \[
  \mathfrak{gl}(V_i)
  =
  \mathfrak{so}(V_i,\varphi_i) \oplus \mathsf{Sym}_0(V_i,\varphi_i) \oplus \field \, \mathrm{Id}_{V_i}
  \qquad
  (i = 1, 2).
  \]
As vector spaces, $\mathfrak{gl}(V_1) \otimes \mathfrak{gl}(V_2) \cong \mathfrak{gl}(V_1\otimes V_2)$.
On this space we define a Lie bracket by
  \[
  [a\otimes b,f\otimes g] = ab \otimes fg - ba \otimes gf,
  \]
where $ab$ and $fg$ denote compositions of linear maps,
and we obtain the Lie algebra decomposition:
  \[
  \mathfrak{gl}(V_1\otimes V_2)
  =
  \mathfrak{so}(V_1\otimes  V_2,\varphi) \oplus \mathsf{Sym}(V_1\otimes V_2,\varphi).
  \]
From the last two displayed equations we get the reductive decomposition:
  \begin{align*}
  \mathfrak{so}(V_1\otimes  V_2,\varphi)
  &\cong
  \big( \, \mathfrak{so}(V_1,\varphi_1) \otimes \field \, \mathrm{Id}_{V_2} \, \big)
  \oplus
  \big( \, \field \, \mathrm{Id}_{V_1}\otimes \mathfrak{so}(V_2,\varphi_2) \, \big)
  \\
  &\quad
  \oplus
  \big( \, \mathfrak{so}(V_1,\varphi_1) \otimes \mathsf{Sym}_0(V_2,\varphi_2) \, \big)
  \oplus
  \big( \, \mathsf{Sym}_0(V_1,\varphi_1) \otimes \mathfrak{so}(V_2,\varphi_2) \, \big)
  \\
  &\cong
  \big( \, \mathfrak{so}_3(\field)\oplus \mathfrak{so}_3(\field) \, \big)
  \oplus
  LY_4
  \\
  &\cong
  \mathfrak{so}_9(\field),
  \end{align*}
which corresponds to the reductive pair
$(\mathfrak{so}_9(\field),\mathfrak{so}_3(\field)\oplus \mathfrak{so}_3(\field))$.
In this way, we obtain the 2-irreducible simple LY algebra $LY_4 = U \oplus U'$ of equation \eqref{LY4decomposition}.
Using the Lie bracket defined above, we obtain
  \begin{align*}
  [ a \otimes s, b \otimes t ]
  &=
  \tfrac12
  [a,b] \otimes \big( st + ts - \tfrac23 \mathrm{tr}(st) I_3 )
  +
  [a,b] \otimes \tfrac13 \mathrm{tr}(st) I_3
  \\
  &\quad
  +
  \tfrac12
  \big( ab + ba - \tfrac23 \mathrm{tr}(ab) I_3 \big) \otimes [s,t]
  +
  \tfrac13 \mathrm{tr}(ab)I_3 \otimes [s,t],
  \\
  [ s' \otimes a', t' \otimes b' ]
  &=
  \tfrac12 [s',t'] \otimes \big( a'b' + b'a' - \tfrac23 \mathrm{tr}(a'b') I_3 \big)
  +
   [s',t'] \otimes \tfrac13 \mathrm{tr}(a'b') I_3
  \\
  &\quad
  +
  \tfrac12 \big( s't' + t's' - \tfrac23 \mathrm{tr}(s't') I_3 \big) \otimes [a',b']
  +
  \tfrac13 \mathrm{tr}(s't') I_3 \otimes [a',b'],
  \\
  [ a \otimes s, s' \otimes a' ]
  &=
  \tfrac12 ( as' + s'a ) \otimes [s,a']
  +
  \tfrac12 [a,s'] \otimes ( sa' + a's ),
  \end{align*}
where
  \begin{alignat*}{3}
  &
  a,b \in \mathfrak{so}(V_1,\varphi_1),
  &
  \qquad
  &
  s,t\in \mathsf{Sym}_0(V_2,\varphi_2),
  &
  \qquad
  &
  \text{for the summand $U$},
  \\
  &
  s',t'\in \mathsf{Sym}_0(V_1,\varphi_1),
  &
  \qquad
  &
  a',b'\in \mathfrak{so}(V_2,\varphi_2),
  &
  \qquad
  &
  \text{for the summand $U'$}.
  \end{alignat*}
The binary product on $LY_4$ is given by the following equations:
  \begin{align*}
  (a\otimes s) \cdot (b \otimes t)
  &=
  \tfrac12[a,b]\otimes (st+ts-\tfrac{2}{3}\mathrm{tr}(st)I_3)
  +
  \tfrac12(ab+ba-\tfrac{2}{3}\mathrm{tr}(ab)I_3)\otimes [s,t],
  \\
  (a\otimes s) \cdot (s' \otimes a')
  &=
  \tfrac12(as'+s'a)\otimes [s,a']
  +
  \tfrac12 [a,s']\otimes(sa'+a's),
  \\
  (s'\otimes a') \cdot (t' \otimes b')
  &=
  \tfrac12[s',t']\otimes(a'b'+b'a'-\tfrac{2}{3}\mathrm{tr}(a'b')I_3)
  \\
  &\quad
  +
  \tfrac12(s't'+t's'-\tfrac{2}{3}\mathrm{tr}(s't')I_3)\otimes [a',b'].
  \end{align*}
The ternary product is given by the following equations:
  \begin{align*}
  \{a\otimes s,  b \otimes t, c \otimes u\}
  &=
  \tfrac13\mathrm{tr}(st)[[a,b],c]\otimes u
  +
  \tfrac13\mathrm{tr}(ab)c\otimes [[s,t],u],
  \\
  \{a\otimes s,  b \otimes t, s' \otimes a'\}
  &=
  \tfrac13\mathrm{tr}(st)[[a,b],s']\otimes a'
  +
  \tfrac13\mathrm{tr}(ab)s'\otimes [[s,t],a'],
  \\
  \{a\otimes s,  s'\otimes a', b \otimes t\}
  &=
  0,
  \\
  \{a\otimes s,  s'\otimes a', t' \otimes b'\}
  &=
  0,
  \\
  \{s'\otimes a',  t' \otimes b', a \otimes s\}
  &=
  \tfrac13\mathrm{tr}(a'b')[[s',t'],a]\otimes s
  +
  \tfrac13\mathrm{tr}(s't')a\otimes [[a',b'],s],
  \\
  \{s'\otimes a',  t' \otimes b', u' \otimes c'\}
  &=
  \tfrac13\mathrm{tr}(a'b')[[s',t'],u']\otimes c'
  +
  \tfrac13\mathrm{tr}(s't')u'\otimes [[a',b'],c'].
  \end{align*}

\subsection{The reductive decomposition $\mathfrak{so}\left(\binom{n{+}1}{2},\mathbb{R}\right)=\mathfrak{so}(n,\mathbb{R}) \oplus M$}

The Lie algebra $L = \mathfrak{so}(n,\mathbb{R})$ consists of the skew-symmetric
$n \times n$ matrices $A$ with real entries; that is, $A^t = -A$.
We have $\dim L = \binom{n}{2}$; we consider the standard basis for $L$
consisting of the matrices $E_{ij} - E_{ji}$ for $1 \le i < j \le n$.

We write $\mathbb{R}^n$ for the natural $n$-dimensional $L$-module with action given by
$A \cdot X = AX$ for $A \in L$ and $X \in \mathbb{R}^n$, the usual product of a matrix and a column vector.
We identify the symmetric square of this natural module, $S^2 \mathbb{R}^n$, with the vector space
$H = H_n(\mathbb{R})$ of symmetric $n \times n$ matrices.
This becomes an $L$-module with the action given by the Lie bracket of matrices
$$ A \cdot S = [A,S] = AS - SA \text{ for } A \in L, S \in H. $$
(This is the symmetrization of the left action of $L$ on $H$, since $AS + S^t\!A^t = AS - SA$.)

We have $\mathfrak{gl}(n,\mathbb{R}) = L \oplus H$ as vector spaces.
We write $H^0 \subset H$ for the subspace of symmetric matrices with trace zero,
and so $\mathfrak{sl}(n,\mathbb{R}) = L \oplus H^0$.
This is a reductive decomposition of $\mathfrak{sl}(n,\mathbb{R})$: $L$ is a Lie subalgebra, $[L,L] \subseteq L$,
and $H^0$ is an $L$-module for the restriction of the adjoint representation, $[L,H^0] \subseteq H^0$.
We have that $\dim H = \binom{n{+}1}{2}$ and so $\dim H^0 = \binom{n{+}1}{2} - 1$.
We denote $N = \binom{n{+}1}{2}$.

Thus there is an embedding of $\mathfrak{so}(n)$ into $\mathfrak{so}( N )$.
We require an orthonormal basis of $H$ so that the $N \times N$ matrices representing
the action of $L \subset \mathfrak{so}( N )$ on $H$ are skew-symmetric.
Since $\mathfrak{sl}(n,\mathbb{R})$ is a simple Lie algebra, its Killing form is a (nonzero) scalar multiple of the trace
form on $n \times n$ matrices: $Tr(A,B) = \mathrm{trace}(AB)$.
We choose as standard basis for $H$ the diagonal matrix units $E_{ii}$ for $1 \le i \le n$ together with the matrices
$\frac{1}{\sqrt{2}}( E_{ij} + E_{ij} )$ for $1 \le i < j \le n$.
The scalars $\frac{1}{\sqrt{2}}$ are required to make this basis orthonormal with respect to the trace form.
So the action of $L \subset \mathfrak{so}( N )$ on $H$ is given as follows:
  \begin{align*}
  ( E_{ij} - E_{ji} ) \cdot E_{kk}
  &=
  \delta_{jk} ( E_{ik} + E_{ki} ) - \delta_{ik} ( E_{jk} + E_{kj} ),
  \\[2pt]
  ( E_{ij} - E_{ji} ) \cdot \tfrac{1}{\sqrt{2}}( E_{k\ell} + E_{\ell k} )
  &=
  \delta_{jk} \tfrac{1}{\sqrt{2}} ( E_{i\ell} + E_{\ell i} )
  + \delta_{j\ell} \tfrac{1}{\sqrt{2}} ( E_{ik} + E_{ki} )
  \\
  &  \quad
  - \delta_{ik} \tfrac{1}{\sqrt{2}} ( E_{j\ell} + E_{\ell j} )
  - \delta_{i\ell} \tfrac{1}{\sqrt{2}} ( E_{jk} + E_{kj} ).
  \end{align*}

The 1-dimensional subspace of $H$ spanned by the identity matrix $I$ is annihilated by the action of $\mathfrak{so}(n)$
so it also has to be annihilated by the action of its image in $\mathfrak{so}(N)$.
The subalgebra $U \subset \mathfrak{so}(N)$ which annihilates $I$ is isomorphic to $\mathfrak{so}(N{-}1)$, so we have
$L = \mathfrak{so}(n) \subset \mathfrak{so}(N{-}1) = U$.
We have a reductive decomposition $U = L \oplus M$, where $M = L^\perp$,
the orthogonal complement of $L$ with respect to the trace form on $U$.
So we have $[ L, M ] \subseteq M$. We provide $M$ with the structure of a Lie-Yamaguti algebra
using the Lie bracket $[A,B]_U$ in $U$ together with the projections $p_L$ and $p_M$ onto $L$ and $M$.
The bilinear operation $[A,B]$ and the trilinear operation $\langle A, B, C \rangle$ are defined by
  \begin{equation} \label{projections}
  [ A, B ] = p_M( \, [ A, B ]_U ),
  \qquad
  \langle A, B, C \rangle = p_M( \, [ \, p_L( [ A, B ]_U ), \, C \, ]_U ).
  \end{equation}
The dimension of this Lie-Yamaguti algebra is
  \[
  \dim M
  =
  \binom{N{-}1}{2} - \binom{n}{2}
  =
  \binom{\binom{n{+}1}{2}{-}1}{2} - \binom{n}{2}
  =
  \frac18 (n{-}2)(n{-}1)(n{+}1)(n{+}4).
  \]
For $n = 3, \dots, 9$ we obtain $\dim M = 7$, 30, 81, 175, 330, 567, 910.

\begin{lemma}
  For $n \ge 3$ and $n \ne 4$ the $L$-module $M$ is irreducible.
\end{lemma}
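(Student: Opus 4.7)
The plan is to identify $U \cong \mathfrak{so}(N{-}1)$ as an $L$-module with the exterior square $\Lambda^2 Q_n$ and then read off the answer from Lemma \ref{lemma2}. Recall that for any finite-dimensional vector space $W$ equipped with a non-degenerate symmetric bilinear form, the orthogonal Lie algebra $\mathfrak{so}(W)$ is isomorphic to $\Lambda^2 W$ via $x \wedge y \mapsto (z \mapsto \langle x, z\rangle y - \langle y, z\rangle x)$, and this isomorphism is equivariant with respect to every subalgebra. Applied to $W = Q_n = H_n(\field)_0$ with the trace form, we obtain $U \cong \Lambda^2 Q_n$ as an $L$-module.

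Next, I would invoke Lemma \ref{lemma2}: for each $n \ge 3$ with $n \ne 4$, it decomposes $\Lambda^2 Q_n$ as a sum of exactly two non-isomorphic irreducible $L$-modules. Consulting Table \ref{orthogonalrepresentations}, one summand is in each case the adjoint module of $L$: $V(2\lambda_1)$ for $n=3$, $V(2\lambda_2)$ for $n=5$, $V(\lambda_2+\lambda_3)$ for $n=6$, and $V(\lambda_2)$ for $n \ge 7$. The remaining summand, call it $M'$, is the one with highest weight $6\lambda_1$, $2\lambda_1+2\lambda_3$, $2\lambda_1+\lambda_2+\lambda_3$, or $2\lambda_1+\lambda_2$ respectively.

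Since $L = \mathfrak{so}(n)$ is simple (here is where $n \ne 4$ enters), the embedding $L \hookrightarrow U$ realizes $L$ as a simple ideal-free $L$-submodule of $U$ isomorphic to the adjoint module, and the multiplicity-one decomposition above forces this embedded copy of $L$ to coincide with the adjoint summand in $\Lambda^2 Q_n$. Therefore $U = L \oplus M'$ as $L$-modules. It remains to verify that $M = L^\perp$ agrees with $M'$. The trace form on $U$ is $L$-invariant, and its restriction to $L$ is a nonzero scalar multiple of the Killing form of $L$, hence non-degenerate; so $U = L \oplus L^\perp$ and $\dim L^\perp = \dim M'$. Moreover, by Schur's lemma the invariant pairing $L \times M' \to \field$ must vanish because $L$ and $M'$ are non-isomorphic irreducible $L$-modules. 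This gives $M' \subseteq L^\perp$, and comparing dimensions yields $M = L^\perp = M'$, which is irreducible.

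The only real obstacle is matching the abstract adjoint summand produced by Lemma \ref{lemma2} with the concrete subalgebra $L \subset U$, but this is immediate from simplicity of $L$ and the fact that the adjoint summand appears with multiplicity one in $\Lambda^2 Q_n$. The case $n = 4$ is excluded precisely because $\mathfrak{so}(4)$ is not simple and the adjoint module itself splits into two summands, producing the two-irreducible structure of $LY_4$ discussed in \S\ref{m4}.
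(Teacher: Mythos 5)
Your argument is correct and follows the same route the paper intends: identify $U \cong \mathfrak{so}(N{-}1)$ with $\Lambda^2 Q_n$ as an $L$-module, read off from Lemma \ref{lemma2} that for $n \ne 4$ this is the sum of the adjoint module and exactly one other irreducible, and conclude that $M = L^\perp$ is that other summand. The details you supply (multiplicity one forcing the embedded $L$ to be the adjoint summand, Schur's lemma giving orthogonality, and the dimension count) are exactly the right glue, and your remark on why $n=4$ is excluded matches the paper's discussion of the $2$-irreducible case.
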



\section{Nonassociative polynomial identities} \label{sectionnonassoc}

Before explaining our computational methods to find the polynomial identities satisfied by the LY algebra $M$
described below we need to introduce some of the concepts involved.
(For further details, see \cite{BD}.)

\subsection{Nonassociative polynomials}

Let $\Omega$ be a set of multilinear operations defined on a vector space $A$.
A \textit{polynomial identity of degree $d$ for the algebra $A$} is an element $f \in F\{ \Omega; X \}$
of the free multioperator algebra with operations $\Omega$ and generators $X$ such that
$f(a_1, \dots, a_d) = 0$ for all $a_1, \dots, a_d \in A$. We denote it by $f \equiv 0$.
Over a field $\field$ of characteristic 0, every polynomial identity is equivalent to a finite set of multilinear identities \cite{ZSSS}.
Given a set of operations $\Omega$, the set of \textit{association types of degree $d$ for $\Omega$} is the set of
all possible combinations of operations from $\Omega$ involving $d$ variables.
In our case, $\Omega = \{ [-,-], (-,-,-) \}$ since LY algebras have a binary and a ternary operation;
in the rest of the paper we use this notation, $[a,b]$ instead of $a \cdot b$, and $(a,b,c)$ instead of $\langle a,b,c \rangle$.
Thus we can consider the association types for $[-,-]$, the association types for $(-,-,-)$ and
the association types for $[-,-]$ and $(-,-,-)$.
Owing to the skew-symmetry of the operations in LY algebras, there are association types which are linearly dependent,
like $ [[-,-],-] $ and $ [-,[-,-]] $.
The next examples give the independent association types in low degrees for $[-,-]$ and $(-,-,-)$ both separately and mixed.

\begin{example} \label{btypes}
Binary types: association types in degrees $d \le 7$ for a skew-symmetric binary operation (the commas within monomials are omitted):
    \begin{align*}
    \!\!\!\!\!\!\!
    d &= 1\colon -
    \quad
    d = 2\colon [ - - ]
    \quad
    d = 3\colon
    [ [ - - ] - ]
    \quad
    d = 4\colon
    [ [ [ - - ] - ] - ], \;  [ [ - - ] [ - - ] ]
    \\[-18pt]
    \end{align*}
    \begin{alignat*}{4}
    d &= 5\colon &
    &[ [ [ [ - - ] - ] - ] - ] &\qquad &[ [ [ - - ] [ - - ] ] - ] &\qquad &[ [ [ - - ] - ] [ - - ] ]
    \\[3pt]
    d &= 6\colon &
    &[ [ [ [ - - ] - ] - ] - ] - ] &\qquad &[ [ [ [ - - ] [ - - ] ] - ] - ] &\qquad &[ [ [ [ - - ] - ] [ - - ] ] - ]
    \\
    &&&[ [ [ [ - - ] - ] - ] [ - - ] ] &\qquad &[ [ [ - - ] [ - - ] ] [ - - ] ] &\qquad &[ [ [ - - ] - ] [ [ - - ] - ] ]
    \\[3pt]
    d &= 7\colon &
    &[ [ [ [ [ [ - - ] - ] - ] - ] - ] - ] &\qquad & [ [ [ [ [ - - ] [ - - ] ] - ] - ] - ] &\qquad & [ [ [ [ [ - - ] - ] [ - - ] ] - ] - ]
    \\
    &&&[ [ [ [ [ - - ] - ] - ] [ - - ] ] - ] &\qquad & [ [ [ [ - - ] [ - - ] ] [ - - ] ] - ] &\qquad & [ [ [ [ - - ] - ] [ [ - - ] - ] ] - ]
    \\
    &&&[ [ [ [ [ - - ] - ] - ] - ] [ - - ] ] &\qquad & [ [ [ [ - - ] [ - - ] ] - ] [ - - ] ] &\qquad & [ [ [ [ - - ] - ] [ - - ] ] [ - - ] ]
    \\
    &&&[ [ [ [ - - ] - ] - ] [ [ - - ] - ] ] &\qquad & [ [ [ - - ] [ - - ] ] [ [ - - ] - ] ]
   \end{alignat*}
\end{example}

\begin{example} \label{ttypes}
Ternary types: association types in degrees $d \le 7$ for a ternary operation skew-symmetric in its first two arguments
(commas omitted):
  \begin{align*}
  d = 1\colon - \qquad
  d = 3\colon ( - - - ) \qquad
  d = 5\colon ( ( - - - ) - - ), \quad ( - - ( - - - ) )
  \\[-18pt]
  \end{align*}
  \begin{alignat*}{4}
  d &= 7\colon &
  &( ( ( - - - ) - - ) - - ) &\qquad & ( ( - - ( - - - ) ) - - ) &\qquad & ( ( - - - ) ( - - - ) - )
  \\
  &&&( ( - - - ) - ( - - - ) ) &\qquad & ( - - ( ( - - - ) - - ) ) &\qquad & ( - - ( - - ( - - - ) ) )
  \end{alignat*}
\end{example}

\begin{example} \label{bttypes}
Mixed types: association types in degrees $d \le 5$ including those involving both operations;
the types from examples \ref{btypes} and \ref{ttypes} also appear:
  \[
  d = 1\colon -
  \qquad
  d = 2\colon [ - - ]
  \qquad
  d = 3\colon [ [ - - ] - ], \quad ( - - - )
  \]
  \[
  d = 4\colon
  [ [ [ - - ] - ] - ] \qquad  [ ( - - - ) - ] \qquad   [ [ - - ] [ - - ] ] \qquad ( [ - - ] - - ) \qquad   ( - - [ - - ] )
  \]
  \begin{alignat*}{5}
  d &= 5\colon &
  &[[[[[ - - ] - ] - ] - ] &\quad  &[[( - - - ) - ] - ] &\quad  &[[[ - - ] [ - - ]] - ] &\quad   &[([ - - ] - - ) - ]
  \\
  &&&[( - - [ - - ]) - ] &\quad   &[[[ - - ] - ] [ - - ]] &\quad   &[( - - - ) [ - - ]] &\quad   &([[ - - ] - ] - - )
  \\
  &&&(( - - - ) - - ) &\quad   &([ - - ] [ - - ] - ) &\quad   &([ - - ] - [ - - ]) &\quad  & ( - - [[ - - ] - ])
  \\
  &&&( - - ( - - - ))
  \end{alignat*}
For degrees $d = 6, 7$ there are respectively 38 and 113 independent types.
\end{example}

Given a set of operations $\Omega$, the (multilinear) \textit{monomials of degree} $d$ are obtained by applying
all possible permutations of the $d$ variables to the association types of degree $d$.
Since the LY operations have skew-symmetries, there are monomials which are linearly dependent,
such as $[[x_1,x_2],x_3]$ and $[[x_2,x_1],x_3]$, or $(x_1,x_2,[x_3,x_4])$ and $(x_2,x_1,[x_4,x_3])$.
Considering only independent types and monomials reduces the memory needed for the computations.
We consider the equivalence classes of monomials determined by the skew-symmetries;
each equivalence class is represented by its normal form as defined in \cite{BD}.

A multilinear polynomial $f \in F\{ \Omega; X \}$ of degree $d$ has consequences in higher degrees,
which are obtained using the operations $\omega \in \Omega$.
To lift $f$ using an operation $\omega$ of arity $r$ we need to add $r-1$ new variables
so the degree of the corresponding \emph{liftings} is $d+r-1$.
We obtain $d$ liftings by replacing $x_i$ by $\omega(x_i,x_{d+1},\cdots,x_{d+r-1})$ and another $r$ liftings by making $f(x_1,\cdots,x_d)$
the $j$-th argument of $\omega$.
To lift a given polynomial to a higher degree $D$ we need to consider all the possible combinations of liftings by operations from $\omega$
which increase degree $d$ to degree $D$.
The liftings of $f$ to degree $D$ form a (not necessarily independent) set of generators for the $S_D$-module of all consequences of $f$
in degree $D$.

For example, in the case $\Omega = \{ [-,-], (-,-,-) \}$, to lift a polynomial from degree 2 to degree 4
we can use the binary operation twice or the ternary operation once.
Owing to the skew-symmetries of the operations in a LY algebra some of the liftings are redundant.

For each degree $d$ the sets of binary, ternary and mixed monomials in normal form are bases of the corresponding
spaces of polynomials. These spaces are $S_d$-modules, the action given by permuting the subscripts of the variables.
We are interested in finding module generators for the subspaces of multilinear identities of the LY algebra $M$ in each degree
to determine whether they can be deduced from the defining identities for LY algebras.

\subsection{Computational methods}

For each degree $d$, we perform the following steps:
\begin{enumerate}
    \item Generate the bases of binary, ternary and mixed monomials in normal form.
    \item Find the $S_d$-modules of all identities of degree $d$ satisfied by $M$, considering separately binary, ternary and mixed monomials.
    \item Obtain the corresponding $S_d$-submodules of lifted identities for $M$ from degrees $< d$.
    \item Compute the quotient modules (all identities modulo lifted identities) and (if they are not trivial) find their minimal sets of generators;
    these generators are the new identities for $M$ in degree $d$.
    \item Compare the generators for the quotient module of new identities with the LY identities of degree $d$
    (the defining identities for LY algebras).
\end{enumerate}
We use two algorithms: ``fill and reduce'' determines a basis for the vector space of
(binary, ternary or mixed) polynomial identities in degree $d$ satisfied by $M$;
``module generators'' applies the representation theory of the symmetric group $S_d$ to extract a subset of the basis
which generates the space of identities as an $S_d$-module.
A modification of ``fill and reduce'' also takes into account the liftings to degree $d$ of the known identities in lower degrees.

\subsubsection{Fill and reduce}
Let $m$ be the dimension of $M$, and let $q$ be the number of normal (binary, ternary or mixed) monomials of degree $d$.
We construct a zero matrix $E$ of size $(q+m) \times q$ whose columns are labeled by the normal monomials.
We divide it into an upper block of size $q \times q$ and a lower block of size $m \times q$ whose rows are labeled
by the elements of the chosen basis of $M$.
We perform the following steps until the rank of $E$ has stabilized; that is, the rank has not increased for some fixed number $s$ of iterations:
\begin{enumerate}
    \item Generate $d$ pseudorandom column vectors $a_1, \cdots, a_d$ of dimension $m$, representing elements of $M$
    with respect to the given basis.
    \item For each $j = 1, \cdots, q$, evaluate the $j$-th monomial on the elements $a_1, \cdots, a_d$ using the LY operations
            $[-,-]$ and $(-,-,-)$ and store the coordinates of the result in rows $q{+}1$ to $q{+}m$ of column $j$.
            After this, each row of the lower block of $E$ represents a linear relation which must be satisfied by the coefficients of
            any identity satisfied by $M$.
    \item Compute the row canonical form of $E$; since it has size $(q+m) \times q$, its rank must be $\le q$,
            so the lower block of $E$ becomes zero again.
\end{enumerate}
After the rank has stabilized, the nullspace of $E$ consists of the coefficient vectors of the linear dependence relations
between the monomials which are satisfied by many pseudorandom choices of elements of $M$.
We extract a basis of this nullspace, and compute the row canonical form of the matrix whose rows are these basis vectors;
the rows of the reduced matrix represent nonassociative polynomials which are ``probably'' polynomial
identities satisfied by $M$. They still need to be proven directly, or at least checked by another independent computation.

The construction of LY algebras by reductive pairs is defined over a field of characteristic zero.
To optimize computer time and memory we often use modular instead of rational arithmetic, especially in higher degrees.
When using modular arithmetic the rank of the matrix $E$ could be smaller than it would be using
rational arithmetic, and thus the nullspace will be too big.
To avoid this we compute a basis of the nullspace using modular arithmetic and reconstruct
the most probable corresponding integral vectors.  We then check each of these identities
using rational arithmetic by generating pseudorandom integral elements of $M$ and evaluating
the corresponding polynomial identity.
If we obtain zero for a large number of choices, we have obtained further confirmation of the hypothetical identity.
For further information, see \cite[Lemma 8]{BP2009}.

To reduce the number of identities we need to check, we first extract a set of module generators from the linear basis of the nullspace.

\subsubsection{Module generators}
Let $I_1, \dots, I_\ell$ be a linear basis for the $S_d$-module of identities in degree $d$ satisfied by $M$;
that is, a basis for a certain subspace of the $q$-dimensional vector space of nonassociative polynomials in degree $d$.
We construct a zero matrix $G$ of size $( q + d !) \times q$ whose columns are labeled by the normal monomials.
We divide it into a $q \times q$ upper block and a $d\;\!! \times q$ lower block whose rows are labeled
by the elements of the symmetric group $S_d$.
We set $\mathrm{oldrank} \leftarrow 0$ and then perform the following steps for $k = 1, \dots, \ell$:
\begin{enumerate}
    \item Set $i \leftarrow 0$.
    \item For each permutation $\pi$ in the symmetric group $S_d$ do the following:
            \begin{enumerate}
                \item Set $i \leftarrow i+1$.
                \item For each $j = 1, \dots, q$ do the following:
                        \begin{itemize}
                            \item Let $c_j$ be the coefficient in $I_k$ corresponding to monomial $m_j$.
                            \item If $c_j \ne 0$ then apply $\pi$ to $m_j$ obtaining $\pi m_j$ and replace $\pi m_j$ by $\pm m_{\overline{j}}$
                            where $m_{\overline{j}}$ is the normal form (the representative of the equivalence class of $\pi m_j$).
                            \item Store the corresponding coefficient $\pm c_j$ in row $q+i$ and column $\overline{j}$ of $G$.
                        \end{itemize}
            \end{enumerate}
    \item Compute the row canonical form of $G$; the lower block of $G$ is again zero.
    \item Set $\mathrm{newrank} \leftarrow \mathrm{rank}(G)$.
    \item If $\mathrm{oldrank} < \mathrm{newrank}$ then:
            \begin{enumerate}
                \item Record $I_k$ as a new module generator.
                \item Set $\mathrm{oldrank} \leftarrow \mathrm{newrank}$.
            \end{enumerate}
\end{enumerate}
If we already know a set of generators for the lifted identities in degree $d$ then we apply the module generators algorithm to them.
At the end of this computation the row space of $G$ contains a basis for the subspace of identities in degree $d$
which are consequences of identities of lower degree.
We then apply the module generators algorithm to the linear basis produced by the fill and reduce algorithm: the
canonical basis of the nullspace of $E$ which gives a basis for the subspace of all identities in degree $d$.
In this way we obtain a set of module generators for the space of all identities in degree $d$ modulo
the space of known identities in degree $d$; that is, a set of module generators for the new identities in degree $d$.


\section{Computational results for the LY algebra $LY_4$} \label{sectionresults}

For the case $\mathfrak{so}(3,\mathbb{R}) \subset \mathfrak{so}(5,\mathbb{R})$, the bilinear operation gives the 7-dimensional LY algebra $LY_3$
the structure of a simple non-Lie Malcev algebra; the polynomial identities for this LY algebra have been studied in \cite{BD}.

In the rest of this section, we consider the 30-dimensional LY algebra $LY_4$ obtained from the embedding
$\mathfrak{so}(4,\mathbb{R}) \subset \mathfrak{so}(9,\mathbb{R})$.
We note that identities for the trilinear operation occur only in odd degrees.
We chose to perform $s = 10$ iterations of the fill-and-reduce algorithm after the rank stabilizes.

\subsection{Identities for $LY_4$ in degree 3}

We note that identities (LY1) and (LY2) in the definition of LY algebras have already been taken into account
in our definition of normal monomials.

\subsubsection{Binary identities}

There is only one association type for $[-,-]$ in degree 3, $[[-,-],-]$, and three normal monomials, $[[a,b],c]$, $[[a,c],b]$, $[[b,c],a]$.
Thus the matrix for the fill-and-reduce algorithm has size $33 \times 3$, and it reaches full rank after one iteration.
Hence the binary product by itself satisfies no identities of degree 3 other than the consequences of anticommutativity.

\subsubsection{Ternary identities}

There is only one association type for $(-,-,-)$ in degree 3, $(-,-,-)$, and three normal monomials, $(a,b,c)$, $(a,c,b)$, $(b,c,a)$.
Thus the matrix for the fill-and-reduce algorithm has size $33 \times 3$, and it reaches full rank after one iteration.
Hence the ternary product by itself satisfies no identities of degree 3 other than the consequences of skew-symmetry in the first two arguments.

\subsubsection{Mixed identities}

Using the normal monomials for both binary and ternary products gives an ordered basis for the 6-dimensional space of
multilinear LY polynomials in degree 3; these monomials label the columns of the $36 \times 6$ matrix for the fill-and-reduce algorithm:
  \[
  [[a,b],c], \quad [[a,c],b], \quad [[b,c],a], \quad (a,b,c), \quad (a,c,b), \quad (b,c,a).
  \]
The rank reaches 5 after one iteration and remains unchanged for 10 iterations, so there is an identity of degree 3
relating the binary and the ternary products.
At this point the row canonical form of the matrix is
  \[
  \left[
  \begin{array}{rrrrrr}
  1 & 0 & 0 & 0 & 0 & 102 \\
  0 & 1 & 0 & 0 & 0 &   1 \\
  0 & 0 & 1 & 0 & 0 & 102 \\
  0 & 0 & 0 & 1 & 0 & 102 \\
  0 & 0 & 0 & 0 & 1 &   1
  \end{array}
  \right]
  \]
A basis for the nullspace is the vector $[1,102,1,1,102,1]$; using representatives from $-51$ to 51 modulo 103 we obtain
the vector $[1,-1,1,1,-1,1]$, which corresponds to identity (LY3) in the definition of LY algebras:
  \[
  [[a,b],c] - [[a,c],b] + [[b,c],a] + (a,b,c) - (a,c,b) + (b,c,a) \equiv 0,
  \]

\subsubsection{Conclusion}

There are no identities for $LY_4$ in degree 3 which are not consequences of the defining identities of LY algebras.

\subsection{Identities for $LY_4$ in degree 4}

Using the module generators algorithm, we find that the $S_4$-module of consequences in degree 4 of the identity $LY3(a,b,c) \equiv 0$
has dimension 10 and is generated by the identities $LY3([a,d],b,c) \equiv 0$ and $[ LY3(a,b,c), d ] \equiv 0$.

\subsubsection{Binary identities}

The two association types for $[-,-]$ in degree 4, namely $[[[-, -], -], -]$ and $[[-, -], [-, -]]$, have respectively 12 and 3 normal monomials.
The matrix for the fill-and-reduce algorithm has size $45 \times 15$, and it reaches full rank after one iteration.
Hence the binary product by itself satisfies no identities of degree 3 other than the consequences of anticommutativity.

\subsubsection{Mixed identities}

The five mixed association types in degree 4, namely
  \[
  [[[-,-],-],-], \quad
  [(-,-,-),-], \quad
  [[-,-],[-,-]], \quad
  ([-,-],-,-), \quad
  (-,-,[-,-]),
  \]
have respectively 12, 12, 3, 12, 6 normal monomials.
The matrix for the fill-and-reduce algorithm has size $75 \times 45$.
The rank reaches 26 after one iteration and remains unchanged for 10 iterations, so there is a 19-dimensional space of identities of degree 4
relating the binary and the ternary products.
We obtain a basis for the nullspace by setting the free variables equal to the standard basis vectors and solving for the leading variables,
and then compute the row canonical form of the nullspace basis.
The coefficients of the canonical basis vectors are 0, 1 and $102 \equiv -1$.
Interpreting the coefficients 0, $\pm 1$ as integers, we sort the basis vectors by increasing square length (from 3 to 18).
We use the module generators algorithm to compare these new identities with the submodule generated by the lifted identities,
and find that there are two new identities which cause the rank to increase from 10 to 19.
These two identities are (LY4) and (LY5) in the definition of LY algebra.

\subsubsection{Conclusion}

There are no identities for $LY_4$ in degree 4 which are not consequences of the defining identities of LY algebras.

\subsection{Identities for $LY_4$ in degree 5}

Using the module generators algorithm, we find the dimension of, and generators for, the $S_5$-module of consequences in degree 5 of
$LY3(a,b,c) \equiv 0$, $LY4(a,b,c,d) \equiv 0$, $LY5(a,b,c,d) \equiv 0$.
The module has dimension 280 and 11 generators:
  \begin{alignat*}{3}
  &LY3([[a,e],d],b,c) \equiv 0, &\qquad
  &LY3([a,d],[b,e],c) \equiv 0, &\qquad
  &[LY3([a,d],b,c),e] \equiv 0,
  \\
  &[[LY3(a,b,c),d],e] \equiv 0, &\qquad
  &LY3((a,d,e),b,c) \equiv 0, &\qquad
  &(LY3(a,b,c),d,e) \equiv 0,
  \\
  &LY4([a,e],b,c,d) \equiv 0, &\qquad
  &LY4(a,b,[c,e],d) \equiv 0, &\qquad
  &[LY4(a,b,c,d),e] \equiv 0,
  \\
  &LY5([a,e],b,c,d) \equiv 0, &\qquad
  &[LY5(a,b,c,d),e] \equiv 0.
  \end{alignat*}

\subsubsection{Binary identities}

There are three association types for $[-,-]$ in degree 5 and 105 normal monomials.
The matrix for the fill-and-reduce algorithm has size $135 \times 105$.
After 4 iterations it reaches full rank, so the binary product by itself satisfies no identities in degree 5 other than the consequences of anticommutativity.

\subsubsection{Ternary identities}

There are two association types for $(-,-,-)$ in degree 5 and 90 normal monomials.
The matrix for the fill-and-reduce algorithm has size $120 \times 90$.
After two iterations, the rank reaches 60, and does not increase for another 10 iterations.
Hence the ternary product satisfies a 30-dimensional space of identities in degree 5.
We use the module generators algorithm to compare these identities with the submodule generated by the lifted identities,
and find that there is one new identity which causes the rank to increase from 280 to 296;
this identity is (LY6).
(The intersection of the space of lifted identities with the space of ternary identities has dimension $280 + 30 - 296 = 14$.)

\subsubsection{Mixed identities}

There are 13 association types involving both operations in degree 5, and 510 normal monomials.
Thus the matrix for the fill-and-reduce algorithm has size $540 \times 510$.
After 8 iterations, the rank reaches 214, and does not increase for another 10 iterations.
Hence there is a 296-dimensional space of identities in degree 5 involving both operations.
But this space contains the space of the same dimension generated by the lifted identities and (LY6),
so there are no more new identities in degree 5.

\subsubsection{Conclusion}

Thus there are no identities of degree 5 for $LY_4$ except those which are consequences of the
defining identities for LY algebras.

\subsection{Identities for $LY_4$ in degree 6}

We extended these computations to degree 6 and found that every identity satisfied by $LY_4$
is a consequence of the defining identities for LY algebras.
The space of all multilinear LY polynomials has dimension 7245, and the liftings of the defining identities for LY algebras
generates a submodule of dimension 5151.
The fill-and-reduce algorithm (for the mixed identities) stabilizes at rank 2094, indicating a nullspace of dimension 5151.

\medskip

We summarize the computations of this section in the following theorem.

\begin{theorem}
Every multilinear polynomial identity of degree $\le 6$ satisfied by the LY algebra $LY_4$ is a consequence of
the defining identities for LY algebras.
\end{theorem}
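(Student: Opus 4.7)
The strategy is to verify the statement degree by degree for $d \in \{3,4,5,6\}$, comparing in each degree the full space of multilinear identities of degree $d$ satisfied by $LY_4$ with the $S_d$-submodule generated by the liftings to degree $d$ of (LY1)--(LY6). If the two spaces agree for every $d \le 6$, there are no new identities beyond consequences of the defining ones and the theorem follows.

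For each $d$ I would first enumerate the normal monomials of degree $d$ in the binary, ternary, and mixed association types, with the skew-symmetries (LY1) and (LY2) built into the notion of normal form as described in Section \ref{sectionnonassoc}. I would then run the fill-and-reduce algorithm on the 30-dimensional realization of $LY_4$ obtained from the embedding $\mathfrak{so}(4) \subset \mathfrak{so}(9)$, continuing past rank stabilization by $s = 10$ iterations, to produce the row-canonical form of the matrix whose nullspace represents all multilinear identities of degree $d$ satisfied by $LY_4$. To control memory I would work over $\mathbb{F}_p$ for a suitable prime $p$, reconstruct integer coefficients, and then verify each candidate identity over $\mathbb{Q}$ by evaluation on pseudorandom integral tuples in the chosen basis of $LY_4$.

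In parallel I would construct a generating set of the $S_d$-submodule of consequences of (LY3)--(LY6) by applying every admissible lifting (substituting a new variable into each slot of each operation of arity $r \in \{2,3\}$) to the defining identities in lower degree, and then extracting an irredundant set of generators via the module-generators algorithm. Comparing the dimension of this submodule with the dimension of the full identity space provides the verification: whenever the two agree, there are no new identities in degree $d$; whenever they disagree, the module-generators algorithm localizes the discrepancy and, if the theorem is true, exhibits the extra generator as one of (LY3)--(LY6) that has not yet been accounted for (as happens for (LY3) in degree 3, for (LY4) and (LY5) in degree 4, and for (LY6) in degree 5).

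The main obstacle is degree 6. There the mixed space has 38 independent association types and ambient dimension 7245, while the submodule of consequences of the defining identities is expected to have dimension 5151, forcing fill-and-reduce on a matrix of size on the order of $7500 \times 7500$; moreover the module-generators step must process orbits under $S_6$, which has 720 elements. The difficulty is therefore purely computational: one must choose the modular prime, the order of row reduction, and the number of random samples carefully so that the modular rank matches the rational rank and so that the machine can hold the intermediate matrices. Once the nullspace dimension of the fill-and-reduce matrix equals the dimension of the lifted submodule, a final rational check on the candidate generators rules out any modular coincidence and completes the proof.
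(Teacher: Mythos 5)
Your plan coincides with the paper's own argument: the same degree-by-degree comparison of the full identity space (computed by fill-and-reduce over $\mathbb{F}_p$ with rational verification) against the $S_d$-submodule of liftings of the defining identities (extracted by the module-generators algorithm), with (LY3), (LY4)--(LY5), and (LY6) appearing as the new generators in degrees $3$, $4$, and $5$, and the dimensions $7245$ and $5151$ matching in degree $6$. This is essentially identical to the computation reported in Section \ref{sectionresults}.
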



\section{The Jordan triple product as trilinear operation} \label{sectionjordan}

Grishkov and Shestakov \cite{GS1} define a \emph{Lie-Jordan algebra} to be a vector space with a bilinear product $[-,-]$
and a trilinear product $\{-,-,-\}$ satisfying the following multilinear identities for all $x, y, z, t, u$:
  \begin{align*}
  &[x,y] \equiv -[y,x], \qquad \{x,y,z\} \equiv \{z,y,x\}, \qquad [[x,y],z] \equiv \{x,y,z\} - \{y,x,z\}, \\
  &[\{x,y,z\},t] \equiv \{[x,t],y,z\} + \{x,[y,t],z\} + \{x,y,[z,t]\}, \\
  &\{\{x,y,z\},t,u\} \equiv \{\{x,t,u\},y,z\} - \{x,\{y,u,t\},z\} + \{x,y,\{z,t,u\}\}.
  \end{align*}
Thus a Lie-Jordan algebra is a Lie algebra with respect to the bilinear product and a Jordan triple system with respect to the trilinear product.
A Lie-Jordan algebra is called \emph{special} if it is isomorphic to a subspace of an associative algebra with the standard Lie bracket and
Jordan triple product:
  \[
  [x,y] = xy-yx, \qquad \{x,y,z\} = xyz+zyx.
  \]
From the construction of universal enveloping algebras of Lie-Jordan algebras \cite{GS1}, it follows that
every Lie-Jordan algebra over a field of characteristic not 2 is special.
The same authors \cite{GS2} show that a Lie algebra $L$ with bilinear product $[-,-]$ is isomorphic to a Lie algebra of skew-symmetric elements of
an associative algebra with involution if and only if $L$ admits an additional trilinear operation $\{-,-,-\}$ such that the resulting structure is a
Lie-Jordan algebra.

Motivated by these considerations, we recall that
the Lie algebra $\mathfrak{so}(n,\field)$ consists of the $n \times n$ matrices which are skew-symmetric with respect to the transpose involution,
and so this subspace is also closed under the Jordan triple product.
As explained in Section \ref{sectionsymmetric}, the underlying vector space of the Lie-Yamaguti algebra $LY_n$ is the orthogonal complement of
$\mathfrak{so}(n,\field)$ with respect to its natural embedding into $\mathfrak{so}(N,\field)$.
So we can retain the original bilinear operation on $LY_n$ but replace the trilinear operation by the projection of the Jordan triple product.
That is, for
  \[
  U = L \oplus M, \qquad U = \mathfrak{so}(N{-}1), \qquad L = \mathfrak{so}(n), \qquad M = LY_n,
  \]
we use the associative matrix product to replace equations \eqref{projections} for $A, B, C \in M$ by
  \begin{equation} \label{projections2}
  [ A, B ] = p_M( \, AB-BA \, ),
  \qquad
  \langle A, B, C \rangle = p_M( \, ABC+CBA \, ).
  \end{equation}
We call these new structures \emph{Lie-Jordan-Yamaguti algebras} or LJY algebras for short.
We write $LJY_n$ for the subspace $LY_n$ with the same bilinear operation as $LY_n$ but the new trilinear operation.

LJY algebras generalize Lie algebras and Jordan triple systems in a way somewhat analogous to the way that
LY algebras generalize Lie algebras and Lie triple systems.
However, an LJY algebra with zero bilinear product is a triple system with a completely symmetric trilinear operation satisfying the Jordan triple derivation
identity, which provides a Jordan analogue of Filippov's 3-Lie algebras \cite{F}.
On the other hand, an LJY algebra with zero trilinear product is a 2-step nilpotent anticommutative (hence Lie) algebra.

In this section we study the polynomial identities satisfied by $LJY_3$ and $LJY_4$.

\subsection{The LJY algebra $LJY_3$}

In this case the underlying vector space $LJY_3$ with the anticommutative bilinear product defines the 7-dimensional simple non-Lie Malcev algebra;
however, the symmetric trilinear product comes from the projection of the Jordan triple product obtained from the orthogonal complement of
the embedding of $\mathfrak{so}(3)$ into $\mathfrak{so}(5)$.
The computational methods are the same as before, but evaluating the products in the algebra is much easier since the tables of structure constants
are much smaller.

We recall the the multilinear form of the Malcev identity,
  \[
  [[a,c],[b,d]] \equiv [[[a,b],c],d] + [[[b,c],d],a] + [[[c,d],a],b] + [[[d,a],b],c],
  \]
and Filippov's $h$-identity \cite{F0} for the 7-dimensional simple non-Lie Malcev algebra,
  \begin{align*}
  &
  [[[[a,b],c],d],e]
  + [[[[a,b],c],e],d]
  - [[[[a,b],d],c],e]
  - [[[[a,b],e],c],d]
  \\
  &
  + [[[[a,d],b],e],c]
  - [[[[a,d],e],b],c]
  + [[[[a,e],b],d],c]
  - [[[[a,e],d],b],c]
  \\
  &
  +2 [[[a,b],[c,d]],e]
  +2 [[[a,b],[c,e]],d]
  +2 [[[a,d],[b,e]],c]
  +2 [[[a,e],[b,d]],c]
  \equiv 0.
  \end{align*}
We introduce three independent identities in degree 5 relating the bilinear and trilinear products:
  \begin{equation} \label{3new5}
  \left\{
  \begin{array}{l}
   2  [ [ \{ a, c, d \}, b ], e ]
   +  [ [ \{ a, c, d \}, e ], b ]
  -2  [ [ \{ a, e, d \}, b ], c ]
   -  [ [ \{ a, e, d \}, c ], b ]
   \\[3pt]
   +  [ \{ a, c, d \}, [ b, e ] ]
   -  [ \{ a, e, d \}, [ b, c ] ]
   +  \{ a, [ [ b, c ], e ], d \}
   -  \{ a, [ [ b, e ], c ], d \}
   \\[3pt]
  -2  \{ a, [ [ c, e ], b ], d \}
  \equiv 0,
  \\[6pt]
   2  \{ [ [ b, c ], e ], a, d \}
  -2  \{ [ [ b, d ], e ], a, c \}
   +  \{ [ [ b, e ], c ], a, d \}
   -  \{ [ [ b, e ], d ], a, c \}
   \\[3pt]
  +2  \{ [ [ c, d ], e ], a, b \}
   -  \{ [ [ c, e ], b ], a, d \}
   +  \{ [ [ c, e ], d ], a, b \}
   +  \{ [ [ d, e ], b ], a, c \}
   \\[3pt]
   -  \{ [ [ d, e ], c ], a, b \}
  \equiv 0,
  \\[6pt]
      \{ [ [ a, d ], e ], c, b \}
   -  \{ [ [ a, e ], d ], c, b \}
   -  \{ [ [ c, d ], e ], a, b \}
   +  \{ [ [ c, e ], d ], a, b \}
   \\[3pt]
  -2  \{ [ [ d, e ], a ], c, b \}
  +2  \{ [ [ d, e ], c ], a, b \}
  +2  \{ b, [ [ a, c ], d ], e \}
  -2  \{ b, [ [ a, c ], e ], d \}
   \\[3pt]
   +  \{ b, [ [ a, d ], c ], e \}
   -  \{ b, [ [ a, e ], c ], d \}
   -  \{ b, [ [ c, d ], a ], e \}
   +  \{ b, [ [ c, e ], a ], d \}
  \equiv 0.
  \end{array}
  \right.
  \end{equation}
Recall that the set $\mathsf{Sh}_{n_1 \cdots n_k}$ of $(n_1,\dots,n_k)$-shuffles where $n_1+\cdots+n_k=n$ consists of all permutations $\sigma$
of the ordered set $X = \{x_1,\dots,x_n\}$ (where $x_i \prec x_j \!\iff\! i < j$) satisfying
$x_{n_1+\cdots+n_i+1}^\sigma \prec \cdots \prec x_{n_1+\cdots+n_{i+1}}^\sigma$
for all $i = 1, \dots, k{-}1$.
We introduce three new independent identities in degree 6 relating the bilinear and trilinear products;
in each case the set $X$ of permuted variables is indicated by the superscript $\sigma$:
  \begin{equation} \label{2new6}
  \left\{
  \begin{array}{l}
  \displaystyle{\sum_{\sigma \in \mathsf{Sh}_{212}}}
  \{ \, [ \, [ b^\sigma, c^\sigma ], d^\sigma ], a, [ e^\sigma, f^\sigma ] \}
  \equiv 0,
  \\[6pt]
  \displaystyle{\sum_{\sigma \in \mathsf{Sh}_{21}}}
  \Big(
   3  [ \{ a, [ c^\sigma, d^\sigma ], e \}, [ b, f^\sigma ] ]
  +3  [ \{ a, [ [ c^\sigma, d^\sigma ], b ], e \}, f^\sigma ]
  \\[-6pt]
  \quad\quad\quad\quad
  -3  [ [ c^\sigma, d^\sigma ], \{ a, [ b, f^\sigma ], e \} ]
  -3  [ [ [ c^\sigma, d^\sigma ], b ], \{ a, f^\sigma, e \} ]
  \\
  \quad\quad\quad\quad\quad
  -   [ \{ a, [ [ c^\sigma, d^\sigma ], f^\sigma ], e \}, b ]
  -   [ \{ a, b, e \}, [ [ c^\sigma, d^\sigma ], f^\sigma ] ]
  \Big)
  \equiv 0.
  \end{array}
  \right.
  \end{equation}
For the third new identity in degree 6, see Figure \ref{newdegree6}.

Altogether we found five new independent identities in degree 6 using modular arithmetic ($p = 103$ and $\sqrt{2} \equiv 38$).
We then used the Maple function \texttt{iratrecon} to determine the simplest rational numbers corresponding to each modular coefficient.
This worked for the first four identities, but failed for the fifth.
We therefore ran the Maple worksheet again with a much larger prime ($p = 100049$ and $\sqrt{2} \equiv 10948$);
this time the rational reconstruction was successful for all five identities.
Table \ref{coefftable} gives the number of terms in each identity, the integer coefficients, the squared Euclidean length of the coefficient vector,
and the dimension of the $S_6$-module generated by the lifted identities and the new identities up to the current identity.
For each identity, we have multiplied the rational coefficients by the least common multiple of their denominators, and divided the resulting integers
by their greatest common divisor.
The new identities are sorted by increasing length.
We used the integer coefficients to check the new identities using arithmetic in characteristic 0 with $\sqrt{2}$:
for each identity, we performed ten trials by generating six random elements of $LJY_3$ with two-digit decimal coefficients,
substituting these elements into the identity, verifying that the result evaluated to 0 after simplification.

  \begin{figure}
  \begin{alignat*}{4}
  {} -9 & \{ e{,} [ [ [ b{,} d ]{,} a ]{,} c ]{,} f \} &
  {}  -9 & [ [ \{ c{,} b{,} d \}{,} e ]{,} [ a{,} f ] ] &
  {}  -8 & \{ c{,} [ [ a{,} f ]{,} [ b{,} e ] ]{,} d \} &
  {}  -8 & [ [ \{ e{,} d{,} f \}{,} b ]{,} [ a{,} c ] ] \\
  {}  -6 & [ \{ c{,} b{,} d \}{,} [ [ e{,} f ]{,} a ] ] &
  {}  -6 & [ [ [ \{ e{,} d{,} f \}{,} c ]{,} b ]{,} a ] &
  {}  -4 & \{ c{,} [ [ [ a{,} f ]{,} b ]{,} e ]{,} d \} &
  {} -4 & [ \{ e{,} d{,} f \}{,} [ [ b{,} c ]{,} a ] ] \\
  {} -4 & [ \{ e{,} [ [ b{,} d ]{,} c ]{,} f \}{,} a ] &
  {} -4 & [ [ \{ e{,} d{,} f \}{,} [ a{,} c ] ]{,} b ] &
  {} -3 & \{ e{,} [ [ [ b{,} d ]{,} c ]{,} a ]{,} f \} &
  {} -3 & \{ e{,} [ [ [ a{,} d ]{,} b ]{,} c ]{,} f \} \\
  {} -3 & [ \{ c{,} b{,} d \}{,} [ [ a{,} e ]{,} f ] ] &
  {} -3 & [ [ \{ c{,} e{,} d \}{,} f ]{,} [ a{,} b ] ] &
  {} -3 & [ \{ [ [ d{,} f ]{,} c ]{,} b{,} e \}{,} a ] &
  {} -3 & [ \{ [ [ c{,} f ]{,} d ]{,} b{,} e \}{,} a ] \\
  {} -2 & \{ c{,} [ [ [ e{,} f ]{,} a ]{,} b ]{,} d \} &
  {} -2 & \{ c{,} [ [ [ b{,} f ]{,} a ]{,} e ]{,} d \} &
  {} -2 & [ \{ e{,} d{,} f \}{,} [ [ a{,} b ]{,} c ] ] &
  {} -2 & [ \{ c{,} f{,} d \}{,} [ [ b{,} e ]{,} a ] ] \\
  {} -2 & [ [ \{ c{,} e{,} d \}{,} a ]{,} [ b{,} f ] ] &
  {} -2 & [ \{ e{,} [ [ c{,} d ]{,} b ]{,} f \}{,} a ] &
  {} -2 & [ \{ e{,} [ [ b{,} c ]{,} d ]{,} f \}{,} a ] &
  {} -2 & [ \{ [ [ d{,} e ]{,} f ]{,} b{,} c \}{,} a ] \\
  {} -2 & [ \{ [ [ c{,} e ]{,} f ]{,} b{,} d \}{,} a ] &
  {} -2 & [ [ \{ c{,} f{,} d \}{,} [ b{,} e ] ]{,} a ] &
  {} -  & [ \{ c{,} f{,} d \}{,} [ [ a{,} b ]{,} e ] ] &
  {} -  & [ \{ c{,} e{,} d \}{,} [ [ a{,} f ]{,} b ] ] \\
  {} -  & [ [ \{ c{,} f{,} d \}{,} b ]{,} [ a{,} e ] ] &
  {} -  & [ \{ [ [ d{,} f ]{,} e ]{,} b{,} c \}{,} a ] &
  {} -  & [ \{ [ [ c{,} f ]{,} e ]{,} b{,} d \}{,} a ] &
  {} +  & [ \{ c{,} e{,} d \}{,} [ [ a{,} b ]{,} f ] ] \\
  {} +  & [ \{ [ [ e{,} f ]{,} d ]{,} b{,} c \}{,} a ] &
  {} +  & [ \{ [ [ e{,} f ]{,} c ]{,} b{,} d \}{,} a ] &
  {} +2 & \{ c{,} [ [ [ e{,} f ]{,} b ]{,} a ]{,} d \} &
  {} +2 & \{ c{,} [ [ [ b{,} f ]{,} e ]{,} a ]{,} d \} \\
  {} +2 & [ \{ c{,} e{,} d \}{,} [ [ b{,} f ]{,} a ] ] &
  {} +2 & [ [ \{ c{,} f{,} d \}{,} a ]{,} [ b{,} e ] ] &
  {} +2 & [ [ \{ e{,} d{,} f \}{,} [ b{,} c ] ]{,} a ] &
  {} +2 & [ [ \{ c{,} e{,} d \}{,} [ b{,} f ] ]{,} a ] \\
  {} +3 & \{ e{,} [ [ [ c{,} d ]{,} b ]{,} a ]{,} f \} &
  {} +3 & \{ e{,} [ [ [ a{,} d ]{,} c ]{,} b ]{,} f \} &
  {} +3 & [ \{ c{,} f{,} d \}{,} [ [ a{,} e ]{,} b ] ] &
  {} +3 & [ \{ c{,} b{,} d \}{,} [ [ a{,} f ]{,} e ] ] \\
  {} +3 & [ [ \{ c{,} f{,} d \}{,} e ]{,} [ a{,} b ] ] &
  {} +3 & [ [ \{ c{,} e{,} d \}{,} b ]{,} [ a{,} f ] ] &
  {} +4 & \{ c{,} [ [ a{,} e ]{,} [ b{,} f ] ]{,} d \} &
  {} +4 & \{ c{,} [ [ a{,} b ]{,} [ e{,} f ] ]{,} d \} \\
  {} +4 & \{ c{,} [ [ [ a{,} f ]{,} e ]{,} b ]{,} d \} &
  {} +4 & [ [ \{ e{,} d{,} f \}{,} a ]{,} [ b{,} c ] ] &
  {} +4 & [ [ \{ c{,} f{,} d \}{,} [ a{,} e ] ]{,} b ] &
  {} +6 & [ [ \{ e{,} d{,} f \}{,} c ]{,} [ a{,} b ] ] \\
  {} +6 & [ [ \{ c{,} b{,} d \}{,} a ]{,} [ e{,} f ] ] &
  {} +6 & [ [ [ \{ c{,} b{,} d \}{,} e ]{,} f ]{,} a ] &
  {} +9 & \{ e{,} [ [ [ c{,} d ]{,} a ]{,} b ]{,} f \} &
  {} +9 & [ [ \{ c{,} b{,} d \}{,} f ]{,} [ a{,} e ] ] \\
  {}+12 & \{ e{,} [ [ a{,} d ]{,} [ b{,} c ] ]{,} f \} &
  {}+12 & \{ e{,} [ [ [ b{,} c ]{,} d ]{,} a ]{,} f \}
  \\[-24pt]
  \end{alignat*}
  \caption{Third new multilinear identity in degree 6 for $LJY_3$}
  \label{newdegree6}
  \end{figure}

\begin{table}[h]
\begin{tabular}{rcrr}
terms & coefficients & length$^2$ & dimension \\ \midrule
 30 &\quad $\pm 1$ &\quad 30 &\quad 2632 \\
 18 &\quad $\pm 1, \pm 3$ &\quad 114 &\quad 2647 \\
 58 &\quad $\pm 1, \pm 2, \pm 3, \pm 4, \pm 6, -8, \pm 9, 12$ &\quad 1244 &\quad 2701 \\
333 &\quad $\pm 1, \pm 2, \pm 3, \pm 4, \pm 5, \pm 6, \pm 8, \pm 9, \pm 12$ &\quad 7468 &\quad 2732 \\
635 &\quad $\pm 3, \pm 9, \pm 18, \pm 27, \pm 36, \pm 45, \pm 104$ &\quad 1172619 &\quad 2733
\end{tabular}
\bigskip
\caption{Five new identities in degree 6 for $LJY_3$}
\label{coefftable}
\end{table}

\begin{theorem}
Every multilinear identity of degree $\le 6$ satisfied by $LJY_3$ is a consequence of the skew-symmetry of $[-,-]$, the symmetry of $\{-,-,-\}$,
the multilinear form of the Malcev identity, the Filippov $h$-identity, the three identities \eqref{3new5}, the two identities \eqref{2new6},
the identity in Figure \ref{newdegree6}, and two more multilinear identities in degree 6 with 333 and 635 terms.
\end{theorem}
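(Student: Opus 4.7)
The plan is to apply the computational procedure of Section \ref{sectionnonassoc} directly to $LJY_3$, running through degrees $d = 3, 4, 5, 6$ and, within each degree, separating binary, ternary, and mixed identities. Concretely, I would first fix an orthonormal basis of $H_3(\field)_0$ with respect to the trace form and compute structure constants for both products on the 7-dimensional complement $M = LJY_3$ using \eqref{projections2}. Then, in each degree, I would (a) enumerate normal-form association types and monomials (cf.\ Examples \ref{btypes}--\ref{bttypes}, with the symmetry $\{a,b,c\} = \{c,b,a\}$ replacing the LY skew-symmetry in the first two arguments); (b) run fill-and-reduce on pseudorandom evaluations in $LJY_3$ until the rank has stabilized for $s = 10$ consecutive iterations, giving a linear basis of the identity space; (c) feed into the module-generators routine the liftings of all identities already established in lower degrees; (d) identify the new module generators in degree $d$ as those basis vectors from (b) whose addition strictly raises the accumulated $S_d$-rank.

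In low degrees the expected outcome is clean. Degree 3 produces nothing beyond the symmetries built into the normal forms. Degree 4 yields the multilinear Malcev identity on the binary side (since the bracket on $LJY_3$ is the simple 7-dimensional non-Lie Malcev bracket) and no purely mixed new identity. Degree 5 contributes Filippov's $h$-identity on the binary side and no new purely ternary identity; on the mixed side, after subtracting the lifted submodule from the fill-and-reduce nullspace and sorting basis vectors by squared coefficient length, we obtain exactly three new $S_5$-module generators, which we recognize as \eqref{3new5}.

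The main obstacle is degree 6, where the normal-monomial basis runs to several thousand elements and the accumulated submodule of consequences of degree $\le 5$ identities is already large. To keep fill-and-reduce tractable, I would work modulo a prime $p$, representing $\sqrt{2}$ by a chosen square root modulo $p$, since the orthonormal basis of $H_3(\field)_0$ involves factors $\tfrac{1}{\sqrt 2}$. Iterating module-generators against the lifts, I expect exactly five new module generators, after which \texttt{iratrecon} would lift their coefficients back to $\mathbb{Q}$. The delicate point is the fifth new identity: Table \ref{coefftable} signals 635 terms and integer coefficients as large as $\pm 104$, so for rational reconstruction to succeed the prime must exceed roughly $2 \cdot 104^2$. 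The convenient choice $p = 103$ recovers the first four identities but fails on the fifth, so one must redo the entire degree-6 fill-and-reduce with a large prime such as $p = 100049$. Each lifted identity would then be independently verified by evaluating it on ten pseudorandom six-tuples from $LJY_3$ using exact arithmetic in $\mathbb{Q}(\sqrt 2)$, and completeness would be confirmed by checking that the lifts of the Malcev identity, Filippov's $h$-identity, \eqref{3new5}, \eqref{2new6}, the identity of Figure \ref{newdegree6}, and the two remaining large identities together span the full identity space produced by fill-and-reduce.
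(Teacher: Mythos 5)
Your proposal follows essentially the same route as the paper: the same fill-and-reduce and module-generators pipeline applied degree by degree to the 7-dimensional complement with the projected Jordan triple product, the same identification of the Malcev and Filippov $h$-identities on the binary side, the three new degree-5 and five new degree-6 mixed generators, and even the same modular strategy (small prime with $\sqrt{2}\equiv 38$, rational reconstruction failing on the fifth identity, rerun with $p=100049$, and final verification in characteristic 0). No substantive differences from the paper's own argument.
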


We note that $LJY_3$ does not satisfy the identity $[[x,y],z] \equiv \{x,y,z\} - \{y,x,z\}$ from the definition of Lie-Jordan algebras.
On the other hand, the Malcev identity is a consequence of the Lie-Jordan  identities of degree $\le 4$;
in fact, it is a consequence of the liftings of $[[x,y],z] \equiv \{x,y,z\} - \{y,x,z\}$.

\subsection{The LJY algebra $LJY_4$}

Similar computations to those described in Section \ref{sectionresults} give the following result.

\begin{theorem}
Every multilinear polynomial identity of degree $\le 6$ satisfied by the LJY algebra $LJY_4$ is a consequence of
the skew-symmetry of the bilinear operation and the symmetry of the trilinear operation.
\end{theorem}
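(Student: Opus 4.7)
The plan is to adapt the computational framework developed for $LY_4$ in Section \ref{sectionresults} and for $LJY_3$ in the previous subsection, applied now to the $30$-dimensional algebra $LJY_4$. The assertion is essentially negative: no identity of degree $\le 6$ exists beyond those forced by the operational symmetries alone. Concretely, I would proceed as follows.

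First I would build explicit structure constants for $LJY_4$. The underlying vector space and bilinear product $[A,B]=p_M(AB-BA)$ are identical to those of $LY_4$, whose model is already given in Section \ref{m4} via the embedding $\mathfrak{so}_3\oplus\mathfrak{so}_3 \subset \mathfrak{so}_9$ coming from the tensor decomposition $\mathbb{R}^3\otimes\mathbb{R}^3$. What changes is the trilinear operation: instead of the projection of the iterated Lie bracket, I take $\{A,B,C\}=p_M(ABC+CBA)$, using the associative matrix product in the ambient $M_9(\mathbb{R})$ (or $\mathfrak{gl}(V_1\otimes V_2)$). Once a basis of $LJY_4$ is fixed so that $p_M$ is realized by truncation, the two multiplication tables are tabulated once and for all.

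Next, for each degree $d=3,4,5,6$ I would enumerate association types and normal monomials with respect to the two \emph{trivial} symmetries that apply in the LJY setting: skew-symmetry $[x,y]=-[y,x]$ of the binary operation and the $\mathbb{Z}_2$-symmetry $\{x,y,z\}=\{z,y,x\}$ of the ternary operation. Note this is different from the LY case, where the ternary normal-form reduction used skew-symmetry in the first two slots; this combinatorial bookkeeping must be redone from scratch, and the association-type counts in Examples \ref{btypes}--\ref{bttypes} must be replaced by their LJY analogues (with mixed-degree dimensions growing rapidly up to degree $6$). For each $d$ I would then run the fill-and-reduce algorithm of Section \ref{sectionnonassoc} over a moderate-size prime (e.g.\ $p=103$ or $p=100049$ for safety), iterating until the rank stabilizes for $s=10$ steps; this yields a basis for the space $I_d(LJY_4)$ of all multilinear identities in degree $d$.

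In parallel, I would compute the $S_d$-submodule $T_d\subseteq I_d(LJY_4)$ generated by the \emph{trivial} identities $[x,y]+[y,x]\equiv 0$ and $\{x,y,z\}-\{z,y,x\}\equiv 0$, lifted to degree $d$ by all available liftings through the two operations. Comparing $\dim T_d$ with $\dim I_d(LJY_4)$ through the module generators algorithm, the theorem is established precisely when equality holds in every degree $d\le 6$. As a consistency check, I would rerun the key degree-$6$ computation over a second prime and also verify by random sampling in characteristic $0$ that no coincidental modular identities appear.

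The main obstacle will be the degree-$6$ mixed computation. Degree $6$ has $38$ independent mixed association types (per Example \ref{bttypes}), giving a space of multilinear LJY polynomials whose dimension is comparable to the $7245$ reported for $LY_4$, and the fill-and-reduce matrix will have on the order of thousands of columns. Unlike $LJY_3$, which is only $7$-dimensional and sits in the small algebra $\mathfrak{so}(5)$, the larger target $LJY_4\subset\mathfrak{so}(44)$ (with $30$ basis elements, each a $44\times 44$ matrix) makes evaluation of each monomial considerably more expensive; memory-efficient sparse arithmetic and modular computation will be essential. Conceptually, however, the outcome is expected: the module $LY_4$ is $2$-irreducible and richly decomposed under its derivation algebra (see \eqref{LY4decomposition}), so the $\mathfrak{so}_3\oplus\mathfrak{so}_3$-module structure of each degree-$d$ identity space should match $\dim T_d$ exactly, leaving no room for new identities in the range tested.
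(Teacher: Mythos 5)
Your proposal follows essentially the same computational route as the paper, whose entire proof is the remark that ``similar computations to those described in Section \ref{sectionresults}'' give the result: tabulate structure constants, generate normal monomials for the LJY symmetries (skew binary, ternary symmetric in the outer arguments), run fill-and-reduce, and compare against the module of trivial consequences. One small factual slip worth correcting: $LJY_4$ sits inside $U=\mathfrak{so}(9)$ (acting on the $9$-dimensional space $H^0$), so its $30$ basis elements are $9\times 9$ matrices, not $44\times 44$ matrices in $\mathfrak{so}(44)$; this only affects your cost estimate, not the validity of the method.
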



\section{Conclusion} \label{sectionconclusion}

The class of Lie-Yamaguti algebras has not been broadly studied, probably because of its complex structure theory.
It would be interesting to work out some more explicit examples to help clarify the situation.
Here are some ideas to consider.

We could extend the computations in this paper to higher degrees,
study other families of reductive pairs using the methods of this paper,
including those producing $k$-irreducible Lie-Yamaguti algebras for $k \ge 2$.

In the spirit of Section~\ref{sectionjordan}, we can replace the LY products by other operations to see
which other structures can be defined on the underlying vector spaces of the representations of the Lie algebras.
As mentioned in \S~\ref{subsection:LY-representation},
if $\mathfrak{h}$ is a simple Lie algebra and $\mathfrak{m}$ is an irreducible $\mathfrak{h}$-module, and
the exterior square $\Lambda^2 \mathfrak{m}$ contains both $\mathfrak{m}$ and the adjoint $\mathfrak{h}$-module,
then there are $\mathfrak{h}$-module morphisms
$ \beta\colon \Lambda^2 \mathfrak{m} \rightarrow \mathfrak{m} $ and $\tau\colon \Lambda^2 \mathfrak{m} \rightarrow \mathfrak{h}$,
which give bilinear and trilinear products on $\mathfrak{m}$ defined by
$[x,y] = \beta(x,y)$ and $ \langle x,y,z \rangle = \tau(x,y) \cdot z$;
then a Lie algebra structure can be defined on $\mathfrak{h} \oplus \mathfrak{m}$ if and only if
identities (LY3) and (LY4) are satisfied.
Thus we need to find the polynomial identities of low degrees satisfied by these operations
to see if we get LY-structures or different classes of algebras.

We used the online LiE software \cite{LiE}
to study the Lie algebra of type $A_2 = \mathfrak{sl}_3(\mathbb{C})$.
The adjoint representation has weight $\lambda_1 + \lambda_2$, denoted $[1,1]$ in LiE.
All the examples we found of modules whose exterior squares contain both the original module and the adjoint module satisfy $a = b$
(and hence are self-dual), which suggests the conjecture that $\Lambda^2 [a,b]$
contains $[a,b]$ and $[1,1]$ if and only if $a = b$.

\smallskip

\noindent $\bullet$
For $[1,1]$, the projection onto $[1,1]$ gives the Lie bracket on the adjoint module, so this is a Lie-Yamaguti algebra
of adjoint type:
  \begin{equation*}
  \label{[1,1]}
  \Lambda^2 [1,1] = [3,0] \oplus [0,3] \oplus \fbox{$[1,1]$}
  \end{equation*}

\smallskip

\noindent $\bullet$
For $[2,2]$, all multiplicities are 1 in the decomposition:
  \begin{equation*}
  \label{[2,2]}
  \begin{array}{l}
  \Lambda^2 [2,2]
  =
  [5,2] \oplus  [2,5] \oplus  [3,3] \oplus  [4,1] \oplus  [1,4] \oplus
  \fbox{$[2,2]$} \oplus  [3,0] \oplus  [0,3] \oplus  \fbox{$[1,1]$}
  \end{array}
  \end{equation*}
This produces an LY algebra that appears in references \cite{BEM09} and \cite{BEM11}.

\smallskip

\noindent $\bullet$
For $[3,3]$, the adjoint module $[1,1]$ occurs with multiplicity 1, but $[3,3]$ occurs with multiplicity 2, so there
will be a one-parameter family of different bilinear products to consider:
  \[
  \begin{array}{l}
  \Lambda^2 [3,3]
  =
  [7,4] \oplus  [4,7] \oplus  [5,5] \oplus  [9,0] \oplus  [6,3] \oplus  [3,6] \oplus [0,9] \oplus  [7,1] \oplus  [4,4] \oplus  [1,7]
  \\
  \qquad\quad
  {}
  \oplus 2[5,2] \oplus 2[2,5] \oplus \fbox{$2[3,3]$} \oplus  [4,1] \oplus  [1,4] \oplus  [2,2] \oplus  [3,0] \oplus  [0,3] \oplus \fbox{$[1,1]$}
  \end{array}
  \]

\smallskip

\noindent $\bullet$
For $[4,4]$ the multiplicities are the same as in the previous case:
  \[
  \begin{array}{l}
  \Lambda^2 [4,4]
  =
  [ 9, 6] \oplus  [ 6, 9] \oplus  [ 7, 7] \oplus  [11, 2] \oplus  [ 8, 5] \oplus  [ 5, 8] \oplus [ 2,11] \oplus  [ 9, 3] \oplus  [ 6, 6]
  \\
  \qquad\quad
  {}
  \oplus  [ 3, 9] \oplus  [10, 1] \oplus 2[ 7, 4] \oplus 2[ 4, 7] \oplus  [ 1,10] \oplus  [ 8, 2] \oplus 2[ 5, 5] \oplus  [ 2, 8] \oplus  [ 9, 0]
  \\
  \qquad\quad
  {}
  \oplus 2[ 6, 3] \oplus 2[ 3, 6] \oplus  [ 0, 9] \oplus  [ 7, 1] \oplus \fbox{$2[ 4, 4]$} \oplus  [ 1, 7] \oplus 2[ 5, 2] \oplus 2[ 2, 5]
  \\
  \qquad\quad
  {}
  \oplus 2[ 3, 3] \oplus  [ 4, 1] \oplus  [ 1, 4] \oplus  [ 2, 2] \oplus {} [ 3, 0] \oplus  [ 0, 3] \oplus  \fbox{$[ 1, 1]$}
  \end{array}
  \]


\end{document}